\documentclass[10pt,reqno]{amsart}

\usepackage[T1]{fontenc}
\usepackage[utf8]{inputenc}
\usepackage{microtype}
\usepackage{mathtools,amssymb,amsthm}
\usepackage[mathlines]{lineno}
\usepackage{csquotes}
\usepackage[
  backend=biber,
  style=numeric,
  sorting=none,
  maxbibnames=99
]{biblatex}
\usepackage[colorlinks=true,linkcolor=blue,citecolor=blue,urlcolor=blue]{hyperref}
\usepackage[nameinlink,noabbrev]{cleveref}

\allowdisplaybreaks
\numberwithin{equation}{section}

\newcommand{\R}{\mathbb{R}}
\newcommand{\C}{\mathbb{C}}
\newcommand{\Mat}{{M}}
\newcommand{\GL}{{GL}}
\newcommand{\U}{{U}}
\newcommand{\SU}{{SU}}

\newcommand{\Herm}{\operatorname{Herm}}
\newcommand{\Sym}{\operatorname{Sym}}
\newcommand{\Diag}{\operatorname{Diag}}
\newcommand{\Tr}{\operatorname{Tr}}
\newcommand{\Id}{{I}}
\newcommand{\dd}{{d}}

\theoremstyle{plain}
\newtheorem{theorem}{Theorem}[section]
\newtheorem{proposition}[theorem]{Proposition}
\newtheorem{lemma}[theorem]{Lemma}
\newtheorem{corollary}[theorem]{Corollary}

\theoremstyle{definition}

\theoremstyle{remark}
\newtheorem{remark}[theorem]{Remark}

\crefname{theorem}{theorem}{theorems}
\Crefname{theorem}{Theorem}{Theorems}
\crefname{proposition}{proposition}{propositions}
\Crefname{proposition}{Proposition}{Propositions}
\crefname{lemma}{lemma}{lemmas}
\Crefname{lemma}{Lemma}{Lemmas}
\crefname{corollary}{corollary}{corollaries}
\Crefname{corollary}{Corollary}{Corollaries}
\crefname{definition}{definition}{definitions}
\Crefname{definition}{Definition}{Definitions}
\crefname{example}{example}{examples}
\Crefname{example}{Example}{Examples}
\crefname{remark}{remark}{remarks}
\Crefname{remark}{Remark}{Remarks}

\defbibheading{bibliography}[\refname]{%
  \section*{#1}%
}

\title[Special Lagrangian Cones in Deep Learning]{Special Lagrangian Cones in Deep Learning}

\author{Tejas Kotwal}
\address{Principia, London, EC2A 4XE, United Kingdom}
\email{tejas@principia-ai.org}

\author{Govind Menon}
\address{Division of Applied Mathematics, Brown University, Providence, RI 02912, USA}\thanks{Supported by the NSF grant DMS 2407055 and the Erik Ellentuck Fellowship at the Institute for Advanced Study, Princeton. }
\email{govind\_menon@brown.edu}

\subjclass[2020]{53C38, 53D12, 53C42, 53D20, 68T07}
\keywords{Harvey--Lawson cone, deep linear network, polar decomposition}

\begin{document}

%\linenumbers

\begin{abstract}
We introduce a matrix generalization of the cone of Harvey and Lawson
and prove that it is an exact special Lagrangian manifold. We further
show that it belongs to a family of exact special Lagrangian manifolds
that foliate the balanced manifold arising in deep learning.
\end{abstract}

\maketitle

\begin{center}
\emph{For Camillo De Lellis on the occasion of his 50th birthday.}
\end{center}

\section{Overview}
This paper presents examples of minimal varieties arising in deep learning. Our main insight is that the fundamental concept of {\em balancedness\/} in deep learning has close ties to geometric analysis. In particular, balanced varieties in the Deep Linear Network (DLN) provide a natural matrix-valued generalization of the cone of Harvey and Lawson.

%We introduce a matrix generalization of the cone of Harvey and Lawson
%and show that it is an exact special Lagrangian manifold. The
%construction is motivated by the notion of \emph{balancedness}, which plays a
%central role in the geometry of deep learning.

\subsection{Main results}
\label{subsec:main-results}

Fix integers \(N\geq 2\) and \(d\geq 1\).
We write \(\Mat_d(\C)\) for the space of \(d\times d\) complex matrices,
\(\GL_d(\C)\) and \(\U_d\) for the general linear and unitary groups,
and \(\Herm_d^+\) for the cone of positive-definite Hermitian matrices.

Writing $\mathbf W=(W_1,\ldots,W_N)$, we define the smooth submanifold
\begin{equation}
  L_+
  :=
  \left\{
    \mathbf W\in\GL_d(\C)^N
    \ \middle|\
    \begin{aligned}
      W_kW_k^*
      &=
      W_{k+1}^*W_{k+1},
      &&1\leq k<N,\\
      W_N\cdots W_1
      &\in
      \Herm_d^+
    \end{aligned}
  \right\}.
  \label{eq:matrix-harvey-lawson-locus}
\end{equation}

We equip \(\Mat_d(\C)^N\) with its standard Calabi--Yau structure.
Let \(\omega\) denote its symplectic form and \(\alpha\) the standard primitive, so that
\(
  \dd\alpha=\omega.
\)

\begin{theorem}
  \label{thm:identity-leaf}
  The submanifold \(L_+\subset\Mat_d(\C)^N\) is special Lagrangian and
  exact, with \(\alpha|_{L_+}=0\). With a suitable orientation, its
  phase is
  \begin{equation}
    \theta
    =
    \frac{\pi}{2}
    \left(
      d(N-1)+N\binom{d}{2}
    \right)
    \pmod{2\pi}.
    \label{eq:phase-main}
  \end{equation}
Hence \(L_+\) is calibrated and therefore minimal and locally
volume-minimizing.
\end{theorem}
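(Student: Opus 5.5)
The plan is to parametrize \(L_+\) by polar data, check \(\alpha|_{L_+}=0\) by a telescoping computation, and then use a unitary symmetry that preserves both \(L_+\) and \(\Omega\). This reduces the phase computation to a slice of diagonal points, where it splits entrywise. Throughout, \(\omega=\tfrac{i}{2}\sum_k\Tr(dW_k\wedge dW_k^*)\), \(\alpha=\tfrac12\operatorname{Im}\sum_k\Tr(W_k^*\,dW_k)\), and \(\Omega\) is the wedge of all \(Nd^2\) complex coordinate differentials.

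\emph{Step 1: parametrization.} I will show that \(L_+\) is exactly the set of tuples
\[
W_k=Q_kHQ_{k-1}^*,\qquad Q_0,\dots,Q_{N-1}\in\U_d,\quad Q_N:=Q_0,\quad H\in\Herm_d^+ .
\]
For the inclusion \(\supseteq\): balancedness holds because \(W_kW_k^*=Q_kH^2Q_k^*=W_{k+1}^*W_{k+1}\), and the product \(W_N\cdots W_1=Q_0H^NQ_0^*\) is positive definite.

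For the inclusion \(\subseteq\), I use singular value decompositions. Balancedness forces all \(W_k\) to share singular values. One then chains the unitary factors so that each \(W_k=Q_kHQ_{k-1}^*\) with \(Q_N\) a priori free. Positivity of \(Q_NH^NQ_0^*\) should force \(Q_N=Q_0\) after absorbing a unitary commuting with \(H\); this uses uniqueness of positive \(N\)-th roots.

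The redundancy is \(Q_k\mapsto Q_kV\), \(H\mapsto V^*HV\). This gives \(\dim_\R L_+=Nd^2+d^2-d^2=Nd^2\), which is half of \(\dim_\R\Mat_d(\C)^N\). Smoothness should follow from the free, proper action described in Step 3.

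\emph{Step 2: exactness.} Set \(A_k=Q_k^*dQ_k\), which is skew-Hermitian. A direct computation gives
\[
\Tr(W_k^*dW_k)=\Tr(H^2A_k)+\Tr(H\,dH)-\Tr(H^2A_{k-1}).
\]
Summing over \(k\) with \(Q_N=Q_0\), the \(A\)-terms telescope away. What remains is \(N\Tr(H\,dH)\), which is real, so \(\alpha|_{L_+}=0\). Consequently \(\omega|_{L_+}=d\alpha|_{L_+}=0\), and \(L_+\) is an exact Lagrangian.

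\emph{Step 3: symmetry reduction for the phase.} The group \(\U_d^N\) acts by \(W_k\mapsto g_kW_kg_{k-1}^*\), with \(g_N:=g_0\). This action preserves \(L_+\) and is unitary on \(\C^{Nd^2}\). Its complex determinant is \(\prod_k\det(g_k)^d\det(g_{k-1})^{-d}=1\), so \(\Omega\) is invariant.

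Every point of \(L_+\) lies in the orbit of a diagonal point \((D,\dots,D)\), where \(D\) is positive diagonal, and points with distinct eigenvalues form a dense open subset. So it suffices to compute \(\Omega\) on \(T_{(D,\dots,D)}L_+\) for such \(D\). The phase is locally constant there, and it extends by continuity to all of \(L_+\).

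The tangent space at \((D,\dots,D)\) is spanned by two kinds of vectors:
\begin{itemize}
\item \((\delta D,\dots,\delta D)\) with \(\delta D\) real diagonal;
\item \(\delta W_k=\xi_kD-D\xi_{k-1}\) with \(\xi_k\in\mathfrak u(d)\) and \(\xi_N=\xi_0\).
\end{itemize}
These decompose by matrix entry.

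\emph{Diagonal entry \((i,i)\).} In the slot coordinates \(\C^N\), the tangent piece is \(\R(1,\dots,1)\oplus i\{x\in\R^N:\sum x=0\}\). An adapted orthonormal basis has determinant \(i^{N-1}\), which contributes \(\tfrac{\pi}{2}(N-1)\) for each of the \(d\) diagonal entries.

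\emph{Off-diagonal pair \((i,j),(j,i)\), \(i<j\).} Take \(\xi_{k,ij}=z_k\) and \(\xi_{k,ji}=-\bar z_k\). The real-linear map \(z\in\C^N\to\C^{2N}\) has complex matrix
\[
\begin{pmatrix}A&iA\\ B&-iB\end{pmatrix}=\begin{pmatrix}A&0\\0&B\end{pmatrix}\begin{pmatrix}I&iI\\ I&-iI\end{pmatrix},
\]
where \(A=d_jI-d_iS\) and \(B=-d_iI+d_jS\), with \(S\) the cyclic shift. Both \(A\) and \(B\) are real and invertible, since \(d_i^N\neq d_j^N\). So the determinant is a nonzero real number times \((-2i)^N\), contributing \(\pm N\tfrac{\pi}{2}\) for each of the \(\binom d2\) pairs.

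Summing these contributions and fixing the orientation (which absorbs real signs) gives \eqref{eq:phase-main}.

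\emph{Step 4: calibration.} Since the phase is constant, \(\operatorname{Re}(e^{-i\theta}\Omega)\) is a calibration by Harvey--Lawson. Hence \(L_+\) is calibrated, and therefore minimal and locally volume-minimizing.

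I expect the main obstacles to be the \(\subseteq\) direction of Step 1, namely pinning down \(Q_N=Q_0\) when \(H\) has repeated eigenvalues, and the sign and orientation bookkeeping in Step 3. For the latter, I would fix the orientation once at a single diagonal point and propagate it by invariance and connectedness.
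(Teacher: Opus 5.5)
Your proof is correct. For the Lagrangian and exactness parts it takes a genuinely different route from the paper; the phase computation follows the same strategy in a different gauge.

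\textbf{Lagrangian and exactness.} The paper first proves \(\omega|_{L_+}=0\) at the points \(\Delta(P)\). The \(\U_d^{N-1}\)-orbit directions are \(\omega\)-orthogonal to all of \(T\mathcal M\) because \(\mu\equiv0\) there, and the directions \((A,\ldots,A)\) with \(A\in\Herm_d\) pair to \(N\operatorname{Im}\Tr(AB)=0\). Only then does it deduce \(\alpha|_{L_+}=0\) from conicality, using \(\alpha=\tfrac12\iota_R\omega\) with \(R\) tangent to \(L_+\). You reverse the logic. Your telescoping identity gives \(\alpha|_{L_+}=0\) at every point at once, with no symmetry reduction, and then \(\omega|_{L_+}=\dd(\alpha|_{L_+})=0\) is immediate. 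Your computation is shorter, and it shows that the closing condition \(Q_N=Q_0\), i.e.\ positivity of the product, is exactly what cancels the gauge terms. The paper's argument instead ties the Lagrangian property to the moment map, hence to balancedness. It also makes clear that exactness is automatic for any conical Lagrangian.

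\textbf{Phase.} Both arguments reduce to diagonal points and split by matrix entry, but the gauges differ. Your cyclic gauge (\(\xi_N=\xi_0\), \(\delta D\) diagonal) uses the conjugation directions \([\eta,D]\) in place of the off-diagonal part of \(\delta H\). As a result, the blocks \(d_jI-d_iS\) have determinant \(\pm(d_j^N-d_i^N)\) and degenerate at repeated eigenvalues, which is why you need the density-and-continuity step. The paper's gauge \(X_0=X_N=0\), with a full Hermitian \(A\), gives instead \(\pm s_N(p_a,p_b)=\pm(p_a^N-p_b^N)/(p_a-p_b)\). This never vanishes, so no genericity is needed. In exchange, your factorization through \(\left(\begin{smallmatrix} I & iI\\ I & -iI\end{smallmatrix}\right)\) exposes the phase \((-2i)^N\) more transparently than the paper's row reduction.

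\textbf{Two small corrections.}

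\emph{The step you flag in Step 1 is not an obstacle.} The identity \(Q_NH^NQ_0^*=(Q_NQ_0^*)(Q_0H^NQ_0^*)\) is a polar decomposition of an invertible matrix. Positivity therefore forces \(Q_NQ_0^*=\Id_d\) by uniqueness, whatever the eigenvalue multiplicities of \(H\). This is \eqref{eq:positive-product-terminal-unitary} in the paper.

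\emph{The \(\U_d^N\)-action of Step 3 is not free on \(L_+\).} Constant scalars \(g_k=e^{i\phi}\Id_d\) act trivially, and more generally the stabilizer of \((D,\ldots,D)\) contains the diagonally embedded centralizer of \(D\). Smoothness should instead come from the free right \(\U_d\)-action \(Q_k\mapsto Q_kV\) on the parameter space. More simply, normalize \(Q_0=\Id_d\), which recovers the paper's diffeomorphism \(\Psi\).
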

The cone of Harvey and Lawson in \(\C^N\) is
\begin{equation}
  C_{\mathrm{HL}}
  :=
  \left\{
    (z_1,\ldots,z_N)\in\C^N
    \ \middle|\
    |z_1|=\cdots=|z_N|,
    \quad
    z_N\cdots z_1\in\R_{\geq 0}
  \right\}.
  \label{eq:harvey-lawson-scalar}
\end{equation}
Harvey and Lawson proved that \(C_{\mathrm{HL}}\) is special Lagrangian
\cite[Theorem 3.1]{HarveyLawson1982}. When \(d=1\),
\begin{equation}
  L_+
  =
  C_{\mathrm{HL}}\setminus\{\mathbf 0\}.
  \label{eq:scalar-harvey-lawson-identification}
\end{equation}
Thus adjoining the vertex to \(L_+\) recovers \(C_{\mathrm{HL}}\).

The defining conditions for \(L_+\) are the matrix analogues of the
two conditions in \eqref{eq:harvey-lawson-scalar}. The relation
\(|z_k|=|z_{k+1}|\) can be written as
\(z_k\overline{z_k}=\overline{z_{k+1}}z_{k+1}\), suggesting
\begin{equation}
  W_kW_k^*
  =
  W_{k+1}^*W_{k+1},
  \qquad
  1\leq k<N.
  \label{eq:balancedness-equations}
\end{equation}
For \(W_1,\ldots,W_N\in\GL_d(\C)\), the corresponding condition on the
\emph{ordered} product is
\(
  W_N\cdots W_1\in\Herm_d^+
\).

For \(d>1\), the diagonal locus of \(L_+\) is a product of \(d\) copies
of the punctured Harvey--Lawson cone. Indeed, under the identification
\(
  \Diag_d(\C)^N\cong(\C^N)^d,
\)
we have
\begin{equation}
  L_+\cap\Diag_d(\C)^N
  \cong\bigl(C_{\mathrm{HL}}\setminus\{ \mathbf 0\}\bigr)^d.
  \label{eq:diagonal-locus}
\end{equation}

In \Cref{sec:leaf-closures}, we show that the closure of \(L_+\) in
\(\Mat_d(\C)^N\) is the matrix generalization of \(C_{\mathrm{HL}}\)
obtained by allowing the matrices \(W_k\) to be singular and the product
\(W_N\cdots W_1\) to be positive \emph{semidefinite}.

We now consider the real vector space
\(
  \Mat_d(\R)^N,
\)
equipped with its standard Euclidean metric.
We write \(O_d\) for the orthogonal group and \(\Sym_d^+\) for the cone
of positive-definite symmetric \(d\times d\) matrices. The real balanced
manifold is
\begin{equation}
  \mathcal M_{\R}
  :=
  \left\{
    \mathbf W\in\GL_d(\R)^N
    \ \middle|\
    W_kW_k^{T}
    =
    W_{k+1}^{T}W_{k+1},
    \quad
    1\leq k<N
  \right\}.
  \label{eq:real-balanced-manifold}
\end{equation}
Consider the submanifold
\begin{equation}
  L_{+,\R}
  :=
  \left\{
    \mathbf W\in\mathcal M_{\R}
    \ \middle|\
    W_N\cdots W_1\in\Sym_d^+
  \right\}.
  \label{eq:real-positive-locus}
\end{equation}

Unlike in the complex case, the minimality of \(L_{+,\R}\) does not
follow from a calibration and hence requires
a separate argument.

\begin{theorem}
  \label{thm:real-identity-fiber}
  The submanifold \(L_{+,\R}\subset \Mat_d(\R)^N\) is minimal with respect
  to the Euclidean metric.
\end{theorem}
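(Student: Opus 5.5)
The plan is to parametrize \(L_{+,\R}\) explicitly, recognize it as a union of orbits of an isometric group action, and reduce minimality to a symmetry computation on a flat slice in the spirit of Hsiang--Lawson.

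\emph{Parametrization.} Take polar decompositions \(W_k=O_kP_k\) with \(O_k\in O_d\) and \(P_k\in\Sym_d^+\). Balancedness \(W_kW_k^T=W_{k+1}^TW_{k+1}\) reads \(O_kP_k^2O_k^T=P_{k+1}^2\), so \(P_{k+1}=O_kP_kO_k^T\). The product then becomes
\[
W_N\cdots W_1=(O_N\cdots O_1)P_1^N .
\]
By uniqueness of the polar decomposition, this product lies in \(\Sym_d^+\) iff \(O_N\cdots O_1=\Id\). Setting \(Q_k=O_k\cdots O_1\), so that \(Q_0=Q_N=\Id\), gives
\[
L_{+,\R}=\bigl\{(Q_kPQ_{k-1}^T)_{k=1}^N : Q_1,\dots,Q_{N-1}\in O_d,\ P\in\Sym_d^+\bigr\},
\]
a smooth embedded image of \(O_d^{N-1}\times\Sym_d^+\). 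Conversely, every such tuple is balanced and has product \(P^N\).

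\emph{Symmetry.} Let \(G=O_d^N\), with indices read cyclically (\(A_0:=A_N\)), act on \(\Mat_d(\R)^N\) by \(W_k\mapsto A_kW_kA_{k-1}^T\). This is an isometric action and it preserves \(L_{+,\R}\): balancedness is preserved, and the product is conjugated by \(A_N\). Every point of \(L_{+,\R}\) is \(G\)-equivalent to a point \(x=(\Lambda,\dots,\Lambda)\) with \(\Lambda\) positive diagonal, using the \(Q_k\) and then conjugating \(P\) to diagonal form with \(A_0=A_N\). So \(L_{+,\R}=G\cdot\Delta\), where \(\Delta=\{(\Lambda,\dots,\Lambda)\}\) is an open cone in a \(d\)-dimensional linear subspace. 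Since \(G\) acts by isometries preserving \(L_{+,\R}\), the mean curvature is equivariant, and it suffices to prove \(H(x)=0\) at such \(x\).

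\emph{Reduction of \(H(x)\).} The stabilizer \(G_x\) contains the sign matrices \(A_k=D\) for all \(k\), with \(D\) diagonal with entries \(\pm1\). \(H(x)\) is fixed by \(G_x\), and the fixed subspace of this sign group in \(\Mat_d(\R)^N\) is \(\Sigma=\Diag_d(\R)^N\). Hence \(H(x)\in\Sigma\cap N_xL_{+,\R}\). The subspace \(\Sigma\) is the fixed set of a group of isometries, so it is totally geodesic. Moreover \(L_{+,\R}\cap\Sigma\) contains \(\Delta\) as an open piece, which is a flat linear cone; the diagonal tuples in \(L_{+,\R}\) near \(x\) are exactly the tuples with all \(\Lambda_k\) equal. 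I would then invoke the standard equivariant first-variation formula: for a \(G\)-invariant submanifold swept out by principal orbits over a slice in a totally geodesic \(\Sigma\) meeting orbits orthogonally,
\[
H=H_\Delta-\nabla^\perp\log v ,
\]
where \(v\) is the orbit-volume function on \(\Sigma\) and \(\perp\) denotes the component normal to \(\Delta\) inside \(\Sigma\). Here \(H_\Delta=0\) because \(\Delta\) is flat.

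\emph{Conclusion and main obstacle.} It remains to show that \(\nabla v(x)\) is tangent to \(\Delta\), i.e.\ orthogonal to the directions \((D_1,\dots,D_N)\) with \(\sum_kD_k=0\). The cyclic shift \(W_k\mapsto W_{k+1}\) is an ambient isometry normalizing \(G\) (the quiver is cyclic precisely because \(A_0=A_N\)). It therefore preserves orbit volumes, so \(v\) is invariant under cyclic permutation of \((D_1,\dots,D_N)\). At the shift-fixed point \(x\), the gradient is then of the form \((E,\dots,E)\), which is orthogonal to all sum-zero directions. Hence \(H(x)=0\).

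The main obstacle is justifying the equivariant mean curvature formula in this setting. One must check that the orbits through \(\Delta\) are principal, at least on an open dense set such as distinct eigenvalues of \(\Lambda\), and that they meet \(\Sigma\) orthogonally, so that extension to all of \(L_{+,\R}\) follows by continuity. If this becomes delicate, I would instead compute \(H(x)\) directly. The tangent space at \(x\) is spanned by the vectors \(X_k\Lambda-\Lambda X_{k-1}+S\), with \(X_k\) skew, \(X_0=X_N=0\), and \(S\) symmetric. I would take the trace of the second fundamental form of the parametrization, projected onto the sum-zero diagonal normals, and use the cyclic symmetry to see that it cancels.
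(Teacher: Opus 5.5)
Your argument is correct in outline, but it takes a longer route than the paper, and you can skip the step you flag as the main obstacle. The paper uses $O_d^{N-1}$ only to reduce to $\Delta(P)=(P,\ldots,P)$ with $P\in\Sym_d^+$ arbitrary. It then uses two ambient isometries that preserve $L_{+,\R}$ and fix $\Delta(P)$: the cyclic shift $c$ and the transpose-reversal $\rho(W_1,\ldots,W_N)=(W_N^T,\ldots,W_1^T)$.
\begin{itemize}
\item Equivariance of $\mathbf H$ under $c$ gives $\mathbf H_{\Delta(P)}=(B,\ldots,B)$.
\item Equivariance under $\rho$ gives $B=B^T$.
\item Orthogonality to the tangent vectors $(A,\ldots,A)$, $A\in\Sym_d$, gives $\Tr(BA)=0$ for all symmetric $A$, so $B=0$.
\end{itemize}

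Your own ingredients give the same short conclusion if you apply the cyclic shift to $\mathbf H(x)$ itself instead of to the orbit-volume function. The sign group makes $\mathbf H(x)=(B_1,\ldots,B_N)$ diagonal. Orthogonality to $(S,\ldots,S)$ with $S$ diagonal gives $\sum_k B_k=0$. Invariance under $c$ gives $B_1=\cdots=B_N$, so $\mathbf H(x)=0$ for every positive diagonal $\Lambda$, with no genericity or continuity argument. The sign group then plays the role of $\rho$, at the price of diagonalizing $P$.

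This does need one fact you did not check: $c$ must preserve $L_{+,\R}$, not merely normalize $G$. It does. On $L_{+,\R}$ one has $W_NW_N^T=P^2=W_1^TW_1$. The shifted product is $W_1(W_N\cdots W_1)W_1^{-1}=V_1P^NV_1^T\in\Sym_d^+$.

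Your Hsiang--Lawson route avoids that check. The formula $\mathbf H(x)=-(\nabla\log v)^\perp$ is in fact valid here, but not for the reason you give. For $d\geq 2$, the orbits through $\Delta$ are exceptional, not principal, in $\Mat_d(\R)^N$. Their stabilizer is the full sign group $\{(D,\ldots,D)\}$ of order $2^d$, while a generic stabilizer is $\{\pm(\Id_d,\ldots,\Id_d)\}$.

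What the first-variation derivation of $H_{G\cdot y}=-\nabla\log v$ on $\Sigma$ actually needs is that the orbit type be constant on $\Sigma$ near $x$. This holds when $\Lambda$ has distinct eigenvalues: the stabilizer of any diagonal tuple near $x$ is then exactly the sign group. You also need orthogonal projection onto $N_xL_{+,\R}$ to map $\Sigma$ into itself. This follows from the sign-group invariance of $T_xL_{+,\R}$.
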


\subsection{Balancedness: a bridge between geometry and optimization}
\label{subsec:balancedness}

The quadratic equations \eqref{eq:balancedness-equations} are known
as the equations for \emph{balancedness} in deep learning. They arise
naturally in the Deep Linear Network (DLN), a basic model for
the geometry of overparametrization \cite{menon_2025}.
A DLN of depth \(N\) and width \(d\) over \(\C\) is described by the
multiplication map
\begin{equation}
  \phi:\Mat_d(\C)^N\longrightarrow\Mat_d(\C),
  \qquad
  \phi(\mathbf W)
  =
  W_N\cdots W_1.
  \label{eq:multiplication-map}
\end{equation}
For any smooth loss \(E:\Mat_d(\C)\to\R\), the Euclidean gradient flow
of \(E\circ\phi\) satisfies the conservation laws \cite{arora_cohen_hazan_2018,MenonYu2025}
\[
  \frac{\dd}{\dd t}
  \left(
    W_kW_k^*-W_{k+1}^*W_{k+1}
  \right)
  =
  0,
  \qquad
  1\leq k<N.
\]

These conserved quantities are the components of a \emph{moment map},
which vanishes precisely when the network is balanced.
The Kempf--Ness theorem characterizes balancedness in terms of
norm minimization \cite{lindsey_menon_2026}.

We call
\begin{equation}
  \mathcal M_{\mathbf 0}
  :=
  \left\{
    \mathbf W\in\Mat_d(\C)^N
    \ \middle|\
    W_kW_k^*
    =
    W_{k+1}^*W_{k+1},
    \quad
    1\leq k<N
  \right\}
  \label{eq:balanced-variety}
\end{equation}
the \emph{balanced variety}. Its intersection with
\(\GL_d(\C)^N\),
\begin{equation}
  \mathcal M
  :=
  \mathcal M_{\mathbf 0}\cap\GL_d(\C)^N,
  \label{eq:balanced-manifold}
\end{equation}
is called the \emph{balanced manifold}.

The factorization of \(\phi(\mathbf W)\) is highly nonunique. The group \(\GL_d(\C)^{N-1}\) acts on
\(\Mat_d(\C)^N\) by changes of basis between adjacent matrices.
For \(g=(g_1,\ldots,g_{N-1})\in\GL_d(\C)^{N-1}\), the action is
\begin{equation}
  (g\cdot\mathbf W)_k
  =
  g_kW_kg_{k-1}^{-1},
  \qquad
  1\leq k\leq N,
  \label{eq:complex-action}
\end{equation}
where \(g_0=g_N=\Id_d\).
These changes of basis cancel in the product, and hence
\[
  \phi(g\cdot\mathbf W)=\phi(\mathbf W).
\]
The subgroup \(\U_d^{N-1}\subset\GL_d(\C)^{N-1}\) consists of
unitary changes of basis. Its action is Hamiltonian with respect
to the standard symplectic form on \(\Mat_d(\C)^N\)
\cite{lindsey_menon_2026,Kotwal2026}.
The corresponding moment map
\(
  \mu:\Mat_d(\C)^N\longrightarrow\Herm_d^{N-1}
\)
has components
\begin{equation}
  \mu_k(\mathbf W)
  =
  W_kW_k^*-W_{k+1}^*W_{k+1},
  \qquad
  1\leq k<N,
  \label{eq:moment-map}
\end{equation}
where \(\Herm_d\) denotes the space of Hermitian \(d\times d\)
matrices. Thus
\begin{equation}
  \mathcal M_{\mathbf 0}
  =
  \mu^{-1}(\mathbf 0).
  \label{eq:balanced-variety-moment-map}
\end{equation}
The conservation laws state that every level set of \(\mu\) is
invariant under the Euclidean gradient flow of \(E\circ\phi\).

Balancedness is selected by a variational principle. For a fixed \(X\in\GL_d(\C)\), let \(\mathcal F_X=\phi^{-1}(X)\) be the set of all factorizations of \(X\). By the Kempf--Ness theorem,
the factorizations of \(X\) that have minimum Euclidean norm
are precisely those in \(\mathcal M \cap \mathcal F_X\)
\cite{lindsey_menon_2026}.

In \Cref{thm:foliation}, we show that the balanced manifold
\(\mathcal M\) is foliated by exact special Lagrangian submanifolds obtained by fixing the \emph{unitary polar factor} of \(X\).
The leaf for which this unitary factor is the identity is the
submanifold introduced in \eqref{eq:matrix-harvey-lawson-locus},
\begin{equation}
  L_+
  =
  \mathcal M\cap\phi^{-1}(\Herm_d^+).
  \label{eq:positive-leaf-balanced-description}
\end{equation}
Thus the balanced manifold $\mathcal M$ carries two distinct notions of minimality.
Its intersection with each fiber of \(\phi\) consists of factorizations
of minimum Euclidean norm, while its special Lagrangian leaves are
calibrated and locally volume-minimizing.

\subsubsection{Further remarks and related work}

\begin{remark}[Yang--Mills theory]
  The ADHM construction describes Yang--Mills instantons on
  \(\R^4\) in terms of matrices satisfying quadratic relations
  \cite{atiyah_hitchin_drinfeld_manin}.
  These relations can be written as the vanishing of a moment
  map. Kronheimer and Nakajima extend this idea to instantons
  on ALE spaces, where they fix a value of the moment map and
  identify solutions related by unitary changes of basis
  \cite{KronheimerNakajima1990}.

  Karen Uhlenbeck first observed the connection between the
  infinite-depth limit of the DLN and a dimensionally reduced
  Yang--Mills problem on a cylinder \cite{Uhlenbeck26}.
  A treatment of the corresponding one-dimensional gauge theory,
  in which changes of basis become \emph{based gauge
  transformations} on the interval, can be found in
  \cite[Chs.~8--9]{Kotwal2026}.
  Further developments will appear in forthcoming work of the
  authors.
\end{remark}

\begin{remark}[Occam's razor]
  The minimum principle furnished by the Kempf--Ness theorem may
  be viewed as a form of Occam's razor.
  Among the many factorizations of a fixed end-to-end matrix
  \(X\), the balanced factorizations are selected as the simplest
  parametric representations of \(X\), in the sense of minimum
  Euclidean norm. This selection may be understood as model
  reduction in the DLN
  \cite[\S1.2]{lindsey_menon_2026}.
\end{remark}

\begin{remark}[Canonical barrier]
  Karmarkar studied the Riemannian geometry underlying
  interior-point methods for linear programming
  \cite{karmarkar_1990}.
A connection between conic optimization and minimal Lagrangian geometry appears in the study of proper convex cones. Cheng and Yau proved that the interior of each such cone carries a distinguished convex solution of the Monge–Amp\`ere equation whose level sets form a foliation by complete hyperbolic affine spheres \cite{cheng1982real}. Hildebrand and Fox independently showed that this solution is a self-concordant barrier for conic optimization, known as the \emph{canonical barrier} \cite{hildebrand, fox2015schwarz}. Fox further showed that it gives rise to a conical Lagrangian submanifold with vanishing mean curvature in a flat para-K\"ahler space.
\end{remark}

\begin{remark}
  Another matrix generalization of the Harvey--Lawson cone,
  studied by Castro and Urbano \cite{urbano_castro_2004}, is
  \begin{equation}
    C_{\SU_d}
    :=
    \{rU:r\geq0,\ U\in\SU_d\}
    \subset\Mat_d(\C).
    \label{eq:SU-cone}
  \end{equation}
  This is the cone over \(\SU_d\) in a single copy of
  \(\Mat_d(\C)\). At its nonzero points, the unitary polar factor
  varies over \(\SU_d\), while the positive polar factor is
  confined to the one-dimensional family
  \(\{r\Id_d:r>0\}\).

  By contrast, \(L_+\subset\Mat_d(\C)^N\) fixes the unitary polar
  factor of the product to \(\Id_d\), while its positive polar
  factor varies over all of \(\Herm_d^+\). It also retains the
  ordered factorization into \(N\) matrices.
  For other constructions of special Lagrangian submanifolds and
  cones with symmetry, see \cite{haskins_2004,Joyce_2002}.
\end{remark}

\subsection{Organization of the paper}

In \Cref{sec:polar-decomposition}, we prove a common polar
factorization and use it to obtain global coordinates on the balanced
manifold \(\mathcal M\). 
In \Cref{sec:proofs}, we establish that \(L_+\) is Lagrangian and
exact, compute its phase, and extend these results to the full family
\(L_Q\). The section concludes with the corresponding minimality
results in the real setting.
In \Cref{sec:leaf-closures}, we study the closures
\(\mathcal C_Q=\overline{L_Q}\) and their relation to the balanced
variety \(\mathcal M_{\mathbf0}\).
\section{Polar decomposition}
\label{sec:polar-decomposition}

The equations for balancedness in \eqref{eq:balancedness-equations} imply that
the matrices \(W_1,\ldots,W_N\) have the same singular values. The following
proposition strengthens this observation by identifying a single
positive-definite Hermitian matrix common to all the matrices. We also obtain
a global parametrization of \(\mathcal M\).

\begin{proposition}
  \label[proposition]{prop:balanced-coordinates}
  For every $\mathbf{W}\in\mathcal M$, there exist unitary matrices
  $V_0,\ldots,V_N\in\U_d$ and a matrix $P\in\Herm_d^+$ such that
  \begin{equation}
    W_k=V_kPV_{k-1}^*,
    \qquad 1\leq k\leq N.
    \label{eq:common-factorization}
  \end{equation}
  After imposing $V_0=\Id_d$, the factorization is unique. Consequently, the map
  \begin{equation}
    \begin{aligned}
      \Psi:\Herm_d^+\times\U_d^N &\longrightarrow\mathcal M,\\
      (P,V_1,\ldots,V_N) &\longmapsto
      \bigl(V_1P,\,V_2PV_1^*,\ldots,V_NPV_{N-1}^*\bigr)
    \end{aligned}
    \label{eq:Psi}
  \end{equation}
  is a diffeomorphism.
\end{proposition}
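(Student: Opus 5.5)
We construct the factorization inductively from the polar decomposition of \(W_1\), using the balancedness equations to pass the positive factor along the chain. Write the right polar decomposition \(W_1=V_1P\) with \(P=(W_1^*W_1)^{1/2}\in\Herm_d^+\) and \(V_1=W_1P^{-1}\in\U_d\); this is unique since \(W_1\) is invertible. Set \(V_0=\Id_d\).

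Suppose \(W_k=V_kPV_{k-1}^*\) has been established. Then \(W_kW_k^*=V_kP^2V_k^*\), and balancedness gives \(W_{k+1}^*W_{k+1}=V_kP^2V_k^*\). Define
\[
  V_{k+1}:=W_{k+1}V_kP^{-1}.
\]
Then
\[
  V_{k+1}^*V_{k+1}=P^{-1}V_k^*\bigl(W_{k+1}^*W_{k+1}\bigr)V_kP^{-1}=P^{-1}P^2P^{-1}=\Id_d,
\]
so \(V_{k+1}\in\U_d\), and by construction \(W_{k+1}=V_{k+1}PV_k^*\). Induction on \(k\) gives \eqref{eq:common-factorization}.

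For uniqueness, suppose \(V_0=\Id_d\) and the factorization holds. Then \(W_1=V_1P\) is a polar decomposition, so \(P=(W_1^*W_1)^{1/2}\) by uniqueness of the positive square root, and \(V_1=W_1P^{-1}\). Inductively, \(V_k=W_kV_{k-1}P^{-1}\) is forced. Hence every \(\mathbf W\in\mathcal M\) determines \((P,V_1,\ldots,V_N)\) uniquely.

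For the map \(\Psi\), first check that it lands in \(\mathcal M\). Each \(W_k=V_kPV_{k-1}^*\) is invertible, and
\[
  W_kW_k^*=V_kP^2V_k^*=W_{k+1}^*W_{k+1}.
\]
The existence and uniqueness above show that \(\Psi\) is a bijection, and they give its inverse explicitly:
\[
  P=(W_1^*W_1)^{1/2},\qquad V_1=W_1P^{-1},\qquad V_k=W_kV_{k-1}P^{-1}.
\]
The map \(\Psi\) is polynomial, hence smooth. The inverse is smooth because \(A\mapsto A^{1/2}\) is smooth (indeed real-analytic) on \(\Herm_d^+\), which follows from the inverse function theorem applied to \(S\mapsto S^2\). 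Its differential \(H\mapsto SH+HS\) is invertible when \(S>0\), since the Sylvester equation has spectra of \(S\) and \(-S\) disjoint.

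The one genuinely delicate point is what "diffeomorphism" means, because \(\mathcal M\) has not yet been shown to be a smooth submanifold. I would handle this as follows. The inverse formula extends to a smooth map on the open set \(\GL_d(\C)^N\), but its values need not be unitary off \(\mathcal M\). So I would note that \(\Psi\) is an injective immersion (its left inverse is smooth), and it is a homeomorphism onto its image because the inverse is continuous. Therefore \(\Psi\) is an embedding, and its image \(\mathcal M\) is an embedded submanifold of dimension \(d^2+Nd^2\), diffeomorphic to \(\Herm_d^+\times\U_d^N\). Alternatively one could check that \(0\) is a regular value of \(\mu\) on \(\GL_d(\C)^N\), but the embedding argument avoids that computation.
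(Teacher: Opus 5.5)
Your proof is correct, and its overall structure matches the paper's: polar decomposition of $W_1$, propagation of the common factor $P$ along the chain via balancedness, recursive recovery of the $V_k$, and smoothness of the square root. The difference lies in the existence step. The paper takes a polar decomposition $W_k=A_kH_k$ of every layer. It then uses balancedness and uniqueness of positive square roots to obtain $H_{k+1}=A_kH_kA_k^*$, and telescopes. You instead define $V_{k+1}:=W_{k+1}V_kP^{-1}$ directly and check $V_{k+1}^*V_{k+1}=\Id_d$. Your version fuses existence with the inverse formula \eqref{eq:recover-unitary-factors}, so a single recursion does both jobs, and the square root is needed only once. The paper's version, however, never inverts $P$; it only needs a polar decomposition of each $W_k$, which exists for singular matrices too. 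That is why it carries over almost verbatim to the rank-deficient setting of \Cref{lem:semidefinite-common-factorization}, after extending partial isometries to unitaries, whereas your recursion is undefined there. Your closing remark is a genuine gain in rigor. The paper's assertion that $\Psi^{-1}$ is smooth implicitly means that the recursive formula extends smoothly to the open set $\GL_d(\C)^N$. You observe that this smooth left inverse makes $\Psi$ an injective immersion and a homeomorphism onto its image, hence an embedding. That is what actually shows $\mathcal M$ is an embedded submanifold of dimension $(N+1)d^2$, so that \emph{diffeomorphism} has its intended meaning.
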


\begin{proof}
  Let
  \(
    \mathbf W=(W_1,\ldots,W_N)\in\mathcal M.
  \)
  For each \(1\leq k\leq N\), write the polar decomposition
  \begin{equation}
    W_k=A_kH_k,
    \qquad
    A_k\in\U_d,
    \qquad
    H_k:=(W_k^*W_k)^{1/2}\in\Herm_d^+.
    \label{eq:layerwise-polar-decomposition}
  \end{equation}
  The equations for balancedness imply
  \[
    H_{k+1}^2
    =
    W_{k+1}^*W_{k+1}
    =
    W_kW_k^*
    =
    A_kH_k^2A_k^*,
    \qquad
    1\leq k<N.
  \]
  Both sides are positive definite, so uniqueness of the
  positive-definite square root gives
  \begin{equation}
    H_{k+1}=A_kH_kA_k^*,
    \qquad
    1\leq k<N.
    \label{eq:successive-positive-factors}
  \end{equation}

  Choose \(V_0\in\U_d\), and set
  \[
    P:=V_0^*H_1V_0,
    \qquad
    V_k:=A_kV_{k-1},
    \quad
    1\leq k\leq N.
  \]
  We claim that
  \begin{equation}
    H_k=V_{k-1}PV_{k-1}^*,
    \qquad
    1\leq k\leq N.
    \label{eq:positive-factors-from-common-P}
  \end{equation}
  This is immediate for \(k=1\). If it holds for \(k\), then
  \eqref{eq:successive-positive-factors} gives
  \[
    H_{k+1}
    =
    A_kV_{k-1}PV_{k-1}^*A_k^*
    =
    V_kPV_k^*.
  \]
  Thus \eqref{eq:positive-factors-from-common-P} follows by induction.
  Substituting it into the polar decomposition of \(W_k\), we obtain
  \[
    W_k
    =
    A_kH_k
    =
    A_kV_{k-1}PV_{k-1}^*
    =
    V_kPV_{k-1}^*,
    \qquad
    1\leq k\leq N.
  \]
  This proves the existence of the common factorization
  \eqref{eq:common-factorization}.

  We next prove uniqueness when \(V_0=\Id_d\).
  The first matrix satisfies \(W_1=V_1P\), and hence
  \(
    W_1^*W_1=P^2.
  \)
  Since \(P\) is positive definite, it is uniquely determined by
  \(P=(W_1^*W_1)^{1/2}\).
  Once \(P\) and \(V_{k-1}\) are known, the \(k\)-th matrix uniquely
  determines
  \begin{equation}
    V_k=W_kV_{k-1}P^{-1},
    \qquad
    1\leq k\leq N.
    \label{eq:recover-unitary-factors}
  \end{equation}
  Thus the factorization is unique.

  Conversely, let \(P\in\Herm_d^+\) and
  \(V_1,\ldots,V_N\in\U_d\), set \(V_0=\Id_d\), and define
  \(
    W_k=V_kPV_{k-1}^*.
  \)
  Then
  \[
    W_kW_k^*
    =
    V_kP^2V_k^*
    =
    W_{k+1}^*W_{k+1},
    \qquad
    1\leq k<N.
  \]
  Hence \(\mathbf W\in\mathcal M\), so the map \(\Psi\) in
  \eqref{eq:Psi} is well defined. The existence and uniqueness proved
  above show that \(\Psi\) is bijective.

  Finally, \(\Psi\) is smooth, while
  \eqref{eq:recover-unitary-factors} gives its inverse recursively.
  The positive-definite square-root map is smooth on \(\Herm_d^+\),
  and matrix multiplication and inversion are smooth. Therefore
  \(\Psi^{-1}\) is smooth, and \(\Psi\) is a diffeomorphism.
\end{proof}

In particular,
\begin{equation}
  \mathcal M
  \cong
  \Herm_d^+\times\U_d^N,
  \qquad
  \dim_{\R}\mathcal M=(N+1)d^2.
  \label{eq:normalized-balanced-coordinates}
\end{equation}

With \(V_0=\Id_d\), the product telescopes to
\begin{equation}
  W_N\cdots W_1
  =
  V_NP^N.
  \label{eq:ordered-product-polar-decomposition}
\end{equation}
Since \(V_N\) is unitary and \(P^N\) is positive-definite Hermitian,
\eqref{eq:ordered-product-polar-decomposition} is the polar decomposition
of the product. By uniqueness of the polar decomposition,
\begin{equation}
  W_N\cdots W_1\in\Herm_d^+
  \quad\Longleftrightarrow\quad
  V_N=\Id_d.
  \label{eq:positive-product-terminal-unitary}
\end{equation}
Consequently, the submanifold \(L_+\) introduced in
\eqref{eq:matrix-harvey-lawson-locus} is given by
\begin{equation}
  L_+
  =
  \Psi\bigl(
    \Herm_d^+\times\U_d^{N-1}\times\{\Id_d\}
  \bigr)
  \cong
  \Herm_d^+\times\U_d^{N-1},
  \qquad
  \dim_{\R}L_+=Nd^2.
  \label{eq:normalized-identity-leaf-coordinates}
\end{equation}

For \(P\in\Herm_d^+\), define
\begin{equation}
  \Delta(P):=(P,\ldots,P)\in L_+.
  \label{eq:diagonal-point}
\end{equation}
Every point of \(L_+\) can be written uniquely as
\(
  \mathbf W
  =
  \Psi(P,V_1,\ldots,V_{N-1},\Id_d).
\)
For
\(
  g=(V_1^*,\ldots,V_{N-1}^*)\in\U_d^{N-1},
\)
the action \eqref{eq:complex-action} gives
\(
  g\cdot\mathbf W=\Delta(P).
\)
Thus every point of \(L_+\) can be moved by the
\(\U_d^{N-1}\)-action to the distinguished point \(\Delta(P)\).

More generally, the unitary polar factor of the product defines
a smooth map
\begin{equation}
  \begin{aligned}
    q:\mathcal M
    &\longrightarrow
    \U_d,\\
    \mathbf W
    & \longmapsto
    \bigl(W_N\cdots W_1\bigr)
    \left(
      \bigl(W_N\cdots W_1\bigr)^*
      \bigl(W_N\cdots W_1\bigr)
    \right)^{-1/2}.
  \end{aligned}
  \label{eq:polar-factor-map}
\end{equation}
In the coordinates of \Cref{prop:balanced-coordinates},
\begin{equation}
  (q\circ\Psi)(P,V_1,\ldots,V_N)
  =
  V_N.
  \label{eq:q-in-balanced-coordinates}
\end{equation}
When \(d=1\), this reduces to
\begin{equation}
  q(z_1,\ldots,z_N)
  =
  \frac{z_N\cdots z_1}{|z_N\cdots z_1|},
  \label{eq:scalar-phase-map}
\end{equation}
the usual phase of the product $z_N\cdots z_1$.

For each \(Q\in\U_d\), define
\begin{equation}
  \begin{aligned}
    L_Q
    :=
    q^{-1}(Q)
    =
    \left\{
      \mathbf W\in\mathcal M
      \ \middle|\
      W_N\cdots W_1\in Q \Herm_d^+
    \right\}.
  \end{aligned}
  \label{eq:polar-leaves}
\end{equation}
By construction,
\(
  L_{\Id_d}=L_+.
\)

\begin{theorem}
  \label[theorem]{thm:foliation}
  The map
  \(
    q:\mathcal M\longrightarrow\U_d
  \)
  is a smooth globally trivial fiber bundle whose fiber over
  \(Q\in\U_d\) is \(L_Q\). In particular,
  \begin{equation}
    \mathcal M
    =
    \bigsqcup_{Q\in\U_d}L_Q,
    \qquad
    \mathcal M\cong L_+\times\U_d,
    \label{eq:foliation}
  \end{equation}
  and every fiber \(L_Q\) is an exact special Lagrangian submanifold of
  \(\Mat_d(\C)^N\). With a suitable orientation, its phase is
  \[
    \theta_Q
    =
    \theta+d\,\arg\det Q
    \pmod{2\pi}.
  \]
\end{theorem}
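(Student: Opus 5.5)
The plan is to deduce everything from \Cref{thm:identity-leaf} (which I take as given for \(L_+\)) by means of an explicit symmetry of \(\Mat_d(\C)^N\) that carries \(L_+\) onto \(L_Q\).

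\emph{Bundle structure.} In the coordinates of \Cref{prop:balanced-coordinates}, \eqref{eq:q-in-balanced-coordinates} says that \(q\circ\Psi\) is the projection \(\Herm_d^+\times\U_d^N\to\U_d\) onto the last factor \(V_N\). Hence \(q\) is a smooth submersion, and
\[
\Psi\bigl(\Herm_d^+\times\U_d^{N-1}\times\{Q\}\bigr)=L_Q .
\]
Define the map \((\mathbf W,Q)\mapsto\Psi(P,V_1,\dots,V_{N-1},Q)\), where \(\mathbf W=\Psi(P,V_1,\dots,V_{N-1},\Id_d)\). This gives a diffeomorphism \(L_+\times\U_d\to\mathcal M\) that commutes with the projections to \(\U_d\). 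It proves the global triviality, the disjoint-union decomposition, and \(\mathcal M\cong L_+\times\U_d\).

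\emph{Transport to \(L_+\).} For \(Q\in\U_d\), let \(T_Q\) act on \(\Mat_d(\C)^N\) by left multiplication of the last factor only:
\[
T_Q(W_1,\dots,W_N)=(W_1,\dots,W_{N-1},QW_N).
\]
Three properties need checking.
\begin{itemize}
\item[(i)] \(T_Q\) preserves \(\mathcal M\), since \(QW_N(QW_N)^*\) plays no role and \((QW_N)^*(QW_N)=W_N^*W_N\).
\item[(ii)] \(T_Q\) maps \(L_+\) onto \(L_Q\), since the product becomes \(QW_N\cdots W_1\in Q\Herm_d^+\).
\item[(iii)] \(T_Q\) is a complex-linear unitary map of \(\Mat_d(\C)^N\) with its standard Hermitian inner product \(\Tr(A^*B)\).
\end{itemize}
By (iii), \(T_Q\) preserves the Kähler form \(\omega\) and the standard primitive \(\alpha\), which is \(\frac12\) of the contraction of \(\omega\) with the radial (Euler) vector field. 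Indeed, \(T_Q\) is linear and symplectic, so it preserves the radial field. Therefore \(\alpha|_{L_Q}=(T_Q^{-1})^*\alpha|_{L_+}=0\) and \(\omega|_{L_Q}=0\), so \(L_Q\) is Lagrangian and exact.

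\emph{Phase.} Write \(\Omega=\bigwedge_{k}\det(\dd W_k)\) for the holomorphic volume form. Here \(\det(\dd W_k)\) denotes the wedge of the \(d^2\) coordinate differentials of \(W_k\), ordered appropriately. Then
\[
T_Q^*\Omega=\det_{\C}(T_Q)\,\Omega .
\]
Left multiplication by \(Q\) on \(\Mat_d(\C)\cong(\C^d)^d\) acts columnwise, so it has complex determinant \((\det Q)^d\). It follows that \(T_Q^*\Omega=(\det Q)^d\,\Omega\). Orient \(L_Q\) by pushing forward the orientation of \(L_+\). Then \(\operatorname{Re}(e^{-i\theta}\Omega)|_{L_+}\) equals the volume form. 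Since \(T_Q\) is an isometry, \(\Omega|_{L_Q}\) equals \(e^{i\theta}(\det Q)^d\) times the volume form of \(L_Q\). This gives \(\theta_Q=\theta+d\arg\det Q\). Special Lagrangian with constant phase follows directly.

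\emph{Main obstacle.} The only real subtlety is confirming that the ``standard primitive'' \(\alpha\) used in \Cref{thm:identity-leaf} is \(\U\)-invariant, i.e.\ that it is the radial primitive rather than something like \(\sum\operatorname{Im}\Tr(W^*\dd W)\) in a non-invariant form. If a non-invariant primitive were intended, I would fall back on showing directly that \(\alpha|_{L_Q}=\frac12\iota_{\mathcal E}\omega|_{L_Q}=0\), where \(\mathcal E\) is the Euler field. This holds because \(L_Q\) is a cone (it is invariant under \(P\mapsto tP\)), so \(\mathcal E\) is tangent to \(L_Q\), and \(L_Q\) is Lagrangian.
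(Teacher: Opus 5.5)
Your proposal is correct and follows essentially the same route as the paper. You read off the global trivialization from the fact that \(q\circ\Psi\) is projection onto \(V_N\), then transport \Cref{thm:identity-leaf} from \(L_+\) to \(L_Q\) via the unitary map \((W_1,\ldots,W_N)\mapsto(W_1,\ldots,W_{N-1},QW_N)\), which preserves \(\omega\) and \(\alpha\) and satisfies \(\tau_Q^*\Omega=(\det Q)^d\Omega\). The obstacle you flag does not arise, since the paper's primitive \(\alpha=\tfrac12\operatorname{Im}\sum_k\Tr(W_k^*\,\dd W_k)\) is visibly invariant under \(W_N\mapsto QW_N\); your fallback cone argument is in any case exactly how the paper proves \(\alpha|_{L_+}=0\).
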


The proof of \Cref{thm:foliation} is given in
\Cref{subsec:proof-special-lagrangian-fibers}.

When \(d=1\), the fibers \(L_Q\) are phase rotations of the
punctured Harvey--Lawson cone. For \(d>1\), the unitary matrix \(Q\)
determines the Lagrangian submanifold \(L_Q\), while its phase
depends only on \(\det Q\).
\[
  e^{i\theta_Q}=e^{i\theta}(\det Q)^d.
\]
In particular, \(L_Q\) has the same phase as \(L_+\) for every
\(Q\in\SU_d\).

For the real setting, the same argument as in
\Cref{prop:balanced-coordinates}, with unitary matrices replaced
by orthogonal matrices, gives a factorization
\begin{equation}
  W_k=V_kPV_{k-1}^{T},
  \qquad
  1\leq k\leq N,
  \label{eq:real-common-factorization}
\end{equation}
where \(P\in\Sym_d^+\) and \(V_0,\ldots,V_N\in O_d\).
After imposing \(V_0=\Id_d\), this factorization is unique.

Consequently, the map
\begin{equation}
  \begin{aligned}
    \Psi_{\R}:\Sym_d^+\times O_d^N
    &\longrightarrow\mathcal M_{\R},\\
    (P,V_1,\ldots,V_N)
    &\longmapsto
    \bigl(
      V_1P,\,
      V_2PV_1^T,\ldots,
      V_NPV_{N-1}^T
    \bigr)
  \end{aligned}
  \label{eq:real-Psi}
\end{equation}
is a diffeomorphism. In these coordinates, the product of the
components of \(\Psi_{\R}(P,V_1,\ldots,V_N)\) is
\(
  V_NP^N.
\)

Consequently,
\[
  L_{+,\R}
  =
  \Psi_{\R}\bigl(
    \Sym_d^+\times O_d^{N-1}\times\{\Id_d\}
  \bigr).
\]

The full family is again indexed by the orthogonal polar factor
of the product. For \(\mathbf W\in\mathcal M_{\R}\), define
the smooth map
\begin{equation}
  q_{\R}:\mathcal M_{\R}\longrightarrow O_d,
  \qquad
  \mathbf W
  \longmapsto
  (W_N\cdots W_1)
  \bigl(
    (W_N\cdots W_1)^{T}
    (W_N\cdots W_1)
  \bigr)^{-1/2}.
  \label{eq:real-polar-factor-map}
\end{equation}
For \(Q\in O_d\), define
\begin{equation}
  L_{Q,\R}
  :=
  q_{\R}^{-1}(Q)
  =
  \left\{
    \mathbf W\in\mathcal M_{\R}
    \ \middle|\
    W_N\cdots W_1\in Q\Sym_d^+
  \right\}.
  \label{eq:real-phase-fibers}
\end{equation}
By construction, \(L_{\Id_d,\R}=L_{+,\R}\).
In these coordinates, \(q_{\R}\) is projection onto the final
\(O_d\)-factor.

\begin{corollary}
  \label[corollary]{cor:real-fibration}
  The map
  \(
    q_{\R}:\mathcal M_{\R}\longrightarrow O_d
  \)
  is a smooth globally trivial fiber bundle whose fiber over
  \(Q\in O_d\) is \(L_{Q,\R}\). In particular,
  \[
    \mathcal M_{\R}
    =
    \bigsqcup_{Q\in O_d}L_{Q,\R},
    \qquad
    \mathcal M_{\R}
    \cong
    L_{+,\R}\times O_d.
  \]
  Each \(L_{Q,\R}\) has exactly \(2^{N-1}\) connected components,
  all of which are minimal in \(\Mat_d(\R)^N\).
  These components form a foliation of \(\mathcal M_{\R}\).
\end{corollary}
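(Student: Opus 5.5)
We plan to obtain everything from the real parametrization $\Psi_{\R}$ in \eqref{eq:real-Psi}, following the proof of \Cref{thm:foliation} in the complex case. Since $\Psi_{\R}$ is a diffeomorphism and, in these coordinates, $q_{\R}$ is projection onto the last $O_d$ factor, the map
\[
  \Sym_d^+\times O_d^{N-1}\times O_d\ni(P,V_1,\ldots,V_{N-1},Q)\longmapsto \Psi_{\R}(P,V_1,\ldots,V_{N-1},Q)
\]
identifies $q_{\R}$ with a product projection. This gives global triviality, the decomposition $\mathcal M_{\R}=\bigsqcup_Q L_{Q,\R}$, and the identification $\mathcal M_{\R}\cong L_{+,\R}\times O_d$, where each fiber is $L_{Q,\R}\cong\Sym_d^+\times O_d^{N-1}$. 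The fibers are then level sets of a submersion, and hence form a (regular) foliation by closed embedded submanifolds. Its leaves are the connected components of the fibers.

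Next we count components. Since $\Sym_d^+$ is convex and hence connected, and $O_d$ has the two components $SO_d$ and $O_d\setminus SO_d$, the fiber $\Sym_d^+\times O_d^{N-1}$ has $2^{N-1}$ components. They are indexed by $(\det V_1,\ldots,\det V_{N-1})\in\{\pm1\}^{N-1}$, or equivalently by the signs of $\det W_1,\ldots,\det W_{N-1}$; note that $\det W_k=\det V_k\det V_{k-1}\det P$.

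For minimality we reduce to \Cref{thm:real-identity-fiber} by isometries. Choose $R\in O_d$ and act on $\Mat_d(\R)^N$ by $(W_1,\ldots,W_N)\mapsto(W_1,\ldots,W_{N-1},QW_N)$. This is a linear Euclidean isometry that preserves $\mathcal M_{\R}$, because $W_N^TW_N$ is unchanged. It multiplies the product on the left by $Q$, so it maps $L_{+,\R}$ onto $L_{Q,\R}$. Isometries preserve minimality, so $L_{Q,\R}$ is minimal, and minimality of a submanifold is local, so each of its components is minimal.

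Orthogonal gauge transformations in $O_d^{N-1}$ acting by \eqref{eq:complex-action} are also isometries; they preserve each fiber and permute its components. No step here is a genuine obstacle. The only points needing care are that $\Psi_{\R}$ is a diffeomorphism, which follows verbatim from the proof of \Cref{prop:balanced-coordinates}, and that all the real content is deferred to \Cref{thm:real-identity-fiber}.
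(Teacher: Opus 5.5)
Your proposal is correct and follows the paper's proof essentially step for step. You get global triviality from \(q_{\R}\circ\Psi_{\R}=V_N\), the count of \(2^{N-1}\) components from the connectedness of \(\Sym_d^+\) and the two components of \(O_d\), minimality by transporting \Cref{thm:real-identity-fiber} along the isometry \((W_1,\ldots,W_N)\mapsto(W_1,\ldots,W_{N-1},QW_N)\), and the foliation from \(q_{\R}\) being a submersion (the only slip is notational: you introduce \(R\in O_d\) but then act by \(Q\), and these should be the same matrix).
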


The proof of \Cref{cor:real-fibration} is given in
\Cref{subsec:proof-real-case}.
\section{Proofs}
\label{sec:proofs}

\subsection{Special Lagrangian submanifolds}
\label{subsec:proof-special-lagrangian-fibers}

We follow the conventions of Harvey and Lawson
\cite{HarveyLawson1982}.
We equip \(\Mat_d(\C)^N\cong\C^{Nd^2}\) with its standard
Calabi--Yau structure. Its symplectic form is
\begin{equation}
  \omega(\mathbf Z,\mathbf Y)
  =
  \operatorname{Im}
  \sum_{k=1}^N\Tr(Z_k^*Y_k),
  \qquad
  \mathbf Z,\mathbf Y\in\Mat_d(\C)^N.
  \label{eq:symplectic-form}
\end{equation}
After fixing an order on the matrix entries, its holomorphic volume
form is
\begin{equation}
  \Omega
  =
  \bigwedge_{k=1}^N\ \bigwedge_{a,b=1}^d
  \dd(W_k)_{ab}.
  \label{eq:holomorphic-volume-form}
\end{equation}
An oriented real \(Nd^2\)-dimensional submanifold
\(L\subset\Mat_d(\C)^N\) is special Lagrangian of phase \(\theta\) if
\[
  \omega|_L=0,
  \qquad
  \operatorname{Im}\bigl(e^{-i\theta}\Omega\bigr)|_L=0,
\]
where the orientation is chosen so that
\(\operatorname{Re}(e^{-i\theta}\Omega)|_L\) is positive.

We use the primitive
\begin{equation}
  \alpha
  =
  \frac12
  \operatorname{Im}
  \sum_{k=1}^N\Tr(W_k^*\,\dd W_k),
  \qquad
  \dd\alpha=\omega.
  \label{eq:standard-primitive}
\end{equation}
A Lagrangian submanifold \(L\subset\Mat_d(\C)^N\) is exact if
\(\alpha|_L\) is exact.

We first prove \Cref{thm:identity-leaf}. We begin by describing the
tangent space of \(L_+\) at \(\Delta(P)\).

\begin{lemma}
  \label[lemma]{lem:identity-leaf-tangent-space}
  Let \(P\in\Herm_d^+\). Every tangent vector
  \(\mathbf Z=(Z_1,\ldots,Z_N)\in T_{\Delta(P)}L_+\) can be written
  uniquely as
  \begin{equation}
    Z_k
    =
    A+X_kP-PX_{k-1},
    \qquad
    1\leq k\leq N,
    \label{eq:identity-leaf-tangent-space}
  \end{equation}
  where
  \[
    A\in\Herm_d,
    \qquad
    X_1,\ldots,X_{N-1}\in\mathfrak u_d,
    \qquad
    X_0=X_N=0.
  \]
  Consequently,
  \begin{equation}
    T_{\Delta(P)}L_+
    =
    \left\{
      (A,\ldots,A):A\in\Herm_d
    \right\}
    \oplus
    T_{\Delta(P)}
    \bigl(\U_d^{N-1}\cdot\Delta(P)\bigr).
    \label{eq:identity-leaf-tangent-decomposition}
  \end{equation}
\end{lemma}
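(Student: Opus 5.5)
We will use the global parametrization \eqref{eq:normalized-identity-leaf-coordinates}: the map \((P',V_1,\ldots,V_{N-1})\mapsto\Psi(P',V_1,\ldots,V_{N-1},\Id_d)\) is a diffeomorphism \(\Herm_d^+\times\U_d^{N-1}\to L_+\) by \Cref{prop:balanced-coordinates}. Its differential at \((P,\Id_d,\ldots,\Id_d)\) is therefore a linear isomorphism onto \(T_{\Delta(P)}L_+\). We compute this differential explicitly.

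Take a curve \(P(t)=P+tA+O(t^2)\) with \(A\in\Herm_d\), and curves \(V_k(t)=\exp(tX_k)\) with \(X_k\in\mathfrak u_d\) for \(1\le k\le N-1\). Keep \(V_0=V_N=\Id_d\), which corresponds to \(X_0=X_N=0\). Differentiating \(W_k(t)=V_k(t)P(t)V_{k-1}(t)^*\) at \(t=0\), and using \(X_{k-1}^*=-X_{k-1}\), gives
\[
  Z_k=X_kP+A+P X_{k-1}^* = A+X_kP-PX_{k-1},
\]
which is \eqref{eq:identity-leaf-tangent-space}. Since every tangent vector at \(\Delta(P)\) is the image of a unique tangent vector \((A,X_1,\ldots,X_{N-1})\) of \(\Herm_d^+\times\U_d^{N-1}\) at \((P,\Id_d,\ldots,\Id_d)\), both existence and uniqueness of the representation follow from \(\Psi\) being a diffeomorphism. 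As a sanity check, the dimension is \(d^2+(N-1)d^2=Nd^2\), in agreement with \eqref{eq:normalized-identity-leaf-coordinates}.

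For the decomposition \eqref{eq:identity-leaf-tangent-decomposition}, we interpret the two pieces separately.
\begin{itemize}
\item The vectors with all \(X_k=0\) give exactly \(\{(A,\ldots,A):A\in\Herm_d\}\).
\item The vectors with \(A=0\) are the infinitesimal generators of the \(\U_d^{N-1}\)-action \eqref{eq:complex-action} at \(\Delta(P)\). Differentiating \(g_kPg_{k-1}^{-1}\) with \(g_k=\exp(tX_k)\) gives \(X_kP-PX_{k-1}\), so these vectors span \(T_{\Delta(P)}(\U_d^{N-1}\cdot\Delta(P))\).
\end{itemize}
The orbit lies in \(L_+\), since the action preserves \(\phi\) and balancedness. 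The sum is direct by the uniqueness already established, so the \(A=0\) part injects and the orbit has dimension \((N-1)d^2\).

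The only point needing care is the identification of the orbit's tangent space with the span of the \(A=0\) vectors, that is, that the orbit is an immersed submanifold whose tangent space is the image of the infinitesimal action. This follows directly because, in the \(\Psi\) coordinates, the orbit of \(\Delta(P)\) is the slice \(\{P\}\times\U_d^{N-1}\): we have \(g\cdot\Delta(P)=\Psi(P,g_1,\ldots,g_{N-1},\Id_d)\). That slice is an embedded submanifold, and its tangent space is the image of the \(A=0\) directions. We expect no real obstacle beyond careful bookkeeping of signs and adjoints.
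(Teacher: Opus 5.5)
Your proposal is correct and is essentially the paper's proof. Like the paper, you differentiate \(\Psi(P,V_1,\ldots,V_{N-1},\Id_d)\) at \((P,\Id_d,\ldots,\Id_d)\), take existence and uniqueness from \(\Psi\) being a diffeomorphism, and identify the \(A=0\) directions with the infinitesimal \(\U_d^{N-1}\)-action. Your extra observation that \(g\cdot\Delta(P)=\Psi(P,g_1,\ldots,g_{N-1},\Id_d)\) makes the orbit an embedded slice, so its tangent space is exactly the \(A=0\) directions; this is a small but welcome justification of a step the paper states without comment.
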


\begin{proof}
  Under the coordinates in
  \eqref{eq:normalized-identity-leaf-coordinates}, the point
  \(\Delta(P)\) corresponds to
  \(
    (P,\Id_d,\ldots,\Id_d).
  \)
  Let \(P(t)\in\Herm_d^+\) and \(V_k(t)\in\U_d\) be smooth curves such
  that
  \[
    P(0)=P,
    \qquad
    V_k(0)=\Id_d,
    \qquad
    1\leq k<N.
  \]
  Write
  \[
    A=\dot P(0)\in\Herm_d,
    \qquad
    X_k=\dot V_k(0)\in\mathfrak u_d,
    \qquad
    X_0=X_N=0.
  \]
  Differentiating
  \[
    W_k(t)=V_k(t)P(t)V_{k-1}(t)^*
  \]
  at \(t=0\) gives
  \[
    \dot W_k(0)
    =
    A+X_kP-PX_{k-1}.
  \]
  Every choice of \(A,X_1,\ldots,X_{N-1}\) arises in this way.
  Uniqueness follows because the parametrization of \(L_+\) in
  \eqref{eq:normalized-identity-leaf-coordinates} is a
  diffeomorphism.

  The terms obtained by setting \(A=0\) are precisely the
  infinitesimal \(\U_d^{N-1}\)-action at \(\Delta(P)\). This gives
  \eqref{eq:identity-leaf-tangent-decomposition}.
\end{proof}

\begin{proof}[Proof of \Cref{thm:identity-leaf}]
  The action of \(\U_d^{N-1}\) defined in \eqref{eq:complex-action}
  preserves \(L_+\), since it preserves both \(\mathcal M\) and the product. It acts unitarily on \(\Mat_d(\C)^N\), and hence
  preserves the symplectic form \(\omega\). Every point of \(L_+\) can
  be moved by this action to a point \(\Delta(P)\). It is therefore
  enough to show that \(T_{\Delta(P)}L_+\) is a Lagrangian subspace of
  \(T_{\Delta(P)}\Mat_d(\C)^N\).

  Let \(\boldsymbol\xi\in\mathfrak u_d^{N-1}\), and let
  \(\boldsymbol\xi^\#\) denote the corresponding infinitesimal vector
  field on \(\Mat_d(\C)^N\). For every
  \(\mathbf Y\in T_{\Delta(P)}\mathcal M\), the moment-map identity
  gives
  \[
    \omega\bigl(
      \boldsymbol\xi^\#(\Delta(P)),
      \mathbf Y
    \bigr)
    =
    \dd\langle\mu,\boldsymbol\xi\rangle(\mathbf Y)
    =
    0,
  \]
  since \(\mu\) vanishes identically on \(\mathcal M\). Thus the orbit
  directions
  \[
    T_{\Delta(P)}
    \bigl(\U_d^{N-1}\cdot\Delta(P)\bigr)
  \]
  pair trivially under \(\omega\) with every tangent vector to
  \(\mathcal M\), and in particular with every tangent vector to
  \(L_+\).

  It remains to pair the first summands in
  \eqref{eq:identity-leaf-tangent-decomposition}. For
  \(A,B\in\Herm_d\),
  \[
    \omega\bigl(
      (A,\ldots,A),
      (B,\ldots,B)
    \bigr)
    =
    N\operatorname{Im}\Tr(AB)
    =
    0,
  \]
  because \(\Tr(AB)\) is real. It follows that
  \(\omega|_{T_{\Delta(P)}L_+}=0\). The
  \(\U_d^{N-1}\)-invariance then gives
  \(\omega|_{L_+}=0\). Since
  \[
    \dim_{\R}L_+
    =
    Nd^2
    =
    \frac12\dim_{\R}\Mat_d(\C)^N
  \]
  by \eqref{eq:normalized-identity-leaf-coordinates}, the submanifold
  \(L_+\) is Lagrangian.

  We next prove exactness. The set \(L_+\) is invariant under positive
  scaling, so the radial vector field
  \(
    R(\mathbf W)=\mathbf W
  \)
  is tangent to \(L_+\). Since
  \(
    \alpha=\frac12\iota_R\omega,
  \)
  for every \(\mathbf Z\in T_{\mathbf W}L_+\) we have
  \[
    \alpha_{\mathbf W}(\mathbf Z)
    =
    \frac12
    \omega_{\mathbf W}
    \bigl(R(\mathbf W),\mathbf Z\bigr)
    =
    0.
  \]
  Hence
  \(
    \alpha|_{L_+}=0.
  \)

  It remains to compute the phase. We first record that the
  \(\U_d^{N-1}\)-action preserves the holomorphic volume form
  \(\Omega\). Indeed, for
  \(
    g=(g_1,\ldots,g_{N-1})\in\U_d^{N-1},
  \)
  each \(g_j\) acts by left multiplication on the \(j\)-th matrix and
  by right multiplication by \(g_j^{-1}\) on the \((j+1)\)-st matrix.
  Its two contributions to the complex determinant on
  \(\Mat_d(\C)^N\) are
  \(
    (\det g_j)^d
    \; \text{and} \;
    (\det g_j^{-1})^d.
  \)
  They cancel, and hence the complex determinant of the full action
  is one. Therefore
  \[
    g^*\Omega=\Omega,
    \qquad
    g\in\U_d^{N-1}.
  \]
  It is consequently enough to calculate the phase at \(\Delta(P)\).

  Choose \(U\in\U_d\) such that
  \[
    P=UDU^*,
    \qquad
    D=\operatorname{diag}(p_1,\ldots,p_d),
    \qquad
    p_a>0.
  \]
  Simultaneous unitary conjugation
  \[
    \Gamma_U(W_1,\ldots,W_N)
    :=
    (U^*W_1U,\ldots,U^*W_NU)
  \]
  preserves \(L_+\) and maps \(\Delta(P)\) to \(\Delta(D)\). On each
  copy of \(\Mat_d(\C)\), its complex determinant is
  \(
    (\det U^*)^d(\det U)^d=1.
  \)
  Thus \(\Gamma_U^*\Omega=\Omega\), and it is enough to compute at
  \(\Delta(D)\).

  Let
  \[
    \mathcal T_D:
    \Herm_d\oplus\mathfrak u_d^{N-1}
    \longrightarrow
    T_{\Delta(D)}L_+
  \]
  denote the isomorphism in
  \eqref{eq:identity-leaf-tangent-space}. After permuting the factors
  in \(\Omega\), which changes the result only by a sign, the
  pullback \(\mathcal T_D^*\Omega\) separates into blocks indexed by
  the diagonal entries \(a\) and the unordered pairs \(a<b\).

  Fix a diagonal entry \(a\). Write
  \[
    A_{aa}=r,
    \qquad
    (X_j)_{aa}=ix_j,
    \qquad
    r,x_j\in\R,
    \qquad
    x_0=x_N=0.
  \]
  Then
  \begin{equation}
    (Z_k)_{aa}
    =
    r+ip_a(x_k-x_{k-1}),
    \qquad
    1\leq k\leq N.
    \label{eq:diagonal-tangent-block}
  \end{equation}
  Relative to the real variables
  \((r,x_1,\ldots,x_{N-1})\), the corresponding complex determinant is
  \[
    \det
    \begin{pmatrix}
      1 & ip_a & 0 & \cdots & 0\\
      1 & -ip_a & ip_a & \ddots & \vdots\\
      \vdots & \ddots & \ddots & \ddots & 0\\
      1 & 0 & \cdots & -ip_a & ip_a\\
      1 & 0 & \cdots & 0 & -ip_a
    \end{pmatrix}
    =
    (-1)^{N-1}N(ip_a)^{N-1}.
  \]
  Thus each diagonal entry contributes the phase
  \(
    i^{N-1}.
  \)

  Now fix \(a<b\). Write
  \[
    A_{ab}=u,
    \qquad
    (X_j)_{ab}=\xi_j,
    \qquad
    u,\xi_j\in\C,
    \qquad
    \xi_0=\xi_N=0.
  \]
  Hermitian and skew-Hermitian symmetry give
  \(
    A_{ba}=\overline u,
    \;
    (X_j)_{ba}=-\overline{\xi_j}.
  \)
  Hence
  \begin{align}
    (Z_k)_{ab}
    &=
    u+p_b\xi_k-p_a\xi_{k-1},
    \label{eq:off-diagonal-tangent-block-ab}\\
    (Z_k)_{ba}
    &=
    \overline u-p_a\overline{\xi_k}
    +p_b\overline{\xi_{k-1}}.
    \label{eq:off-diagonal-tangent-block-ba}
  \end{align}
  Define
  \[
    s_N(x,y)
    :=
    \sum_{\ell=0}^{N-1}x^{N-1-\ell}y^\ell
    =
    x^{N-1}+x^{N-2}y+\cdots+y^{N-1}.
  \]
  Since \(p_a,p_b>0\), we have \(s_N(p_a,p_b)>0\). A direct row
  reduction shows that the coefficient matrices of the complex-linear
  maps
  \[
    (u,\xi_1,\ldots,\xi_{N-1})
    \longmapsto
    \bigl(
      (Z_1)_{ab},
      \ldots,
      (Z_N)_{ab}
    \bigr)
  \]
  and
  \[
    (u,\xi_1,\ldots,\xi_{N-1})
    \longmapsto
    \bigl(
      \overline{(Z_1)_{ba}},
      \ldots,
      \overline{(Z_N)_{ba}}
    \bigr)
  \]
  have determinants
  \(
    (-1)^{N-1}s_N(p_a,p_b)
    \)
  and
  \(
    s_N(p_a,p_b),
  \)
  respectively. These determinants are real and nonzero. The
  remaining contribution comes from
  \begin{align*}
    &\dd u\wedge
    \dd\xi_1\wedge\cdots\wedge\dd\xi_{N-1}
    \wedge
    \dd\overline u\wedge
    \dd\overline{\xi_1}\wedge\cdots
    \wedge\dd\overline{\xi_{N-1}}\\
    &\qquad
    =
    c_Ni^N\,
    \dd\operatorname{Re}u\wedge
    \dd\operatorname{Im}u\wedge
    \bigwedge_{j=1}^{N-1}
    \left(
      \dd\operatorname{Re}\xi_j
      \wedge
      \dd\operatorname{Im}\xi_j
    \right)
  \end{align*}
  for some \(c_N\in\R^\times\), where the sign depends only on the
  chosen order of the variables. Thus each unordered pair \(a<b\)
  contributes the phase
  \(
    i^N.
  \)

  There are \(d\) diagonal blocks and \(\binom d2\) off-diagonal
  blocks. Multiplying their contributions gives
  \begin{equation}
    \mathcal T_D^*\Omega
    =
    c(D)\,
    i^{\,d(N-1)+N\binom d2}\,
    \nu_D,
    \qquad
    c(D)\in\R^\times,
    \label{eq:holomorphic-volume-tangent-restriction}
  \end{equation}
  where \(\nu_D\) is a real volume form. The sign of \(c(D)\) depends
  only on the fixed ordering of the matrix entries and tangent
  variables, and not on the positive numbers \(p_1,\ldots,p_d\).

  Choosing the orientation for which the real factor in
  \eqref{eq:holomorphic-volume-tangent-restriction} is positive, we
  obtain
  \[
    \theta
    =
    \frac{\pi}{2}
    \left(
      d(N-1)+N\binom d2
    \right)
    \pmod{2\pi}.
  \]
  Therefore \(L_+\) is special Lagrangian of phase \(\theta\). It is
  calibrated by
  \(\operatorname{Re}(e^{-i\theta}\Omega)\), and hence is minimal and
  locally volume-minimizing
  \cite[\S II.4, Theorem 4.2]{HarveyLawson1982}.
\end{proof}

We now pass from \(L_+\) to \(L_Q\).

\begin{proof}[Proof of \Cref{thm:foliation}]
  In the coordinates of \Cref{prop:balanced-coordinates}, the
  product of the components of \(\Psi(P,V_1,\ldots,V_N)\) is
  \(
    V_NP^N.
  \)
  
  Therefore
  \[
    (q\circ\Psi)(P,V_1,\ldots,V_N)=V_N.
  \]
  It follows that \(q\) is a smooth globally trivial fiber bundle and
  that
  \begin{equation}
    L_Q
    =
    \Psi\bigl(
      \Herm_d^+\times\U_d^{N-1}\times\{Q\}
    \bigr)
    \cong
    \Herm_d^+\times\U_d^{N-1}.
    \label{eq:general-leaf-coordinates}
  \end{equation}
  Under the identification of \(L_+\) in
  \eqref{eq:normalized-identity-leaf-coordinates}, the diffeomorphism
  \(\Psi\) gives the global trivialization in \eqref{eq:foliation}.
  Since \(\Herm_d^+\) and \(\U_d\) are connected, every fiber \(L_Q\)
  is connected. Thus the fibers are the leaves of a foliation of
  \(\mathcal M\).

  For \(Q\in\U_d\), define
  \[
    \tau_Q(W_1,\ldots,W_N)
    =
    (W_1,\ldots,W_{N-1},QW_N).
  \]
  This map preserves balancedness and satisfies
  \[
    (\tau_Q\mathbf W)_N\cdots(\tau_Q\mathbf W)_1
    =
    Q(W_N\cdots W_1),
  \]
  so \(\tau_Q(L_+)=L_Q\). Moreover,
  \[
    \tau_Q^*\omega=\omega,
    \qquad
    \tau_Q^*\alpha=\alpha,
    \qquad
    \tau_Q^*\Omega=(\det Q)^d\Omega.
  \]
  Together with \Cref{thm:identity-leaf}, these identities show that
  \(L_Q\) is special Lagrangian and that
  \(
    \alpha|_{L_Q}=0.
  \)
  In particular, \(L_Q\) is exact. Orient \(L_Q\) by transporting the
  chosen orientation of \(L_+\) under \(\tau_Q\). Its phase is then
  \begin{equation}
    \theta_Q
    =
    \theta+d\,\arg\det Q
    \pmod{2\pi}.
    \label{eq:phase-general-Q}
    \qedhere
  \end{equation}
\end{proof}

\subsection{Minimal submanifolds in the real case}
\label{subsec:proof-real-case}

Recall the coordinates \(\Psi_{\R}\) for the real balanced manifold from
\eqref{eq:real-Psi}.

\begin{proof}[Proof of \Cref{thm:real-identity-fiber}]
  The group \(O_d^{N-1}\) acts on \(\Mat_d(\R)^N\) by the restriction
  of \eqref{eq:complex-action}. This action is orthogonal and preserves
  \(L_{+,\R}\). As in the complex case, every point
  \[
    \mathbf W
    =
    \Psi_{\R}(P,V_1,\ldots,V_{N-1},\Id_d)
  \]
  is mapped by
  \(
    (V_1^T,\ldots,V_{N-1}^T)\in O_d^{N-1}
  \)
  to
  \(
    \Delta(P):=(P,\ldots,P).
  \)
  It is therefore enough to prove that the mean-curvature vector
  vanishes at every \(\Delta(P)\).

  We use two symmetries of \(L_{+,\R}\). First, the cyclic shift
  \[
    c(W_1,\ldots,W_N)
    :=
    (W_2,\ldots,W_N,W_1)
  \]
  is an ambient orthogonal transformation preserving \(L_{+,\R}\).
  Indeed, if
  \[
    \mathbf W
    =
    \Psi_{\R}(P,V_1,\ldots,V_{N-1},\Id_d),
  \]
  set \(V_N=\Id_d\) and
  \[
    \widetilde P:=V_1PV_1^T,
    \qquad
    \widetilde V_k:=V_{k+1}V_1^T,
    \quad
    1\leq k<N.
  \]
  A direct substitution gives
  \[
    c(\mathbf W)
    =
    \Psi_{\R}
    \bigl(
      \widetilde P,
      \widetilde V_1,\ldots,\widetilde V_{N-1},
      \Id_d
    \bigr).
  \]

  Second, the transpose-reversal map
  \[
    \rho(W_1,\ldots,W_N)
    :=
    (W_N^T,\ldots,W_1^T)
  \]
  is an ambient orthogonal transformation preserving \(L_{+,\R}\).
  It preserves the equations for balancedness and satisfies
  \[
    \bigl(\rho(\mathbf W)\bigr)_N\cdots
    \bigl(\rho(\mathbf W)\bigr)_1
    =
    (W_N\cdots W_1)^T.
  \]

  Both \(c\) and \(\rho\) fix \(\Delta(P)\). Write the mean-curvature
  vector at this point as
  \[
    \mathbf H_{\Delta(P)}
    =
    (B_1,\ldots,B_N).
  \]
  Equivariance of mean curvature under \(c\) gives
  \(
    B_1=\cdots=B_N=:B.
  \)
  Equivariance under \(\rho\) then gives
  \(
    B=B^T.
  \)

  For every \(A\in\Sym_d\), the curve
  \(
    t\longmapsto\Delta(P+tA)
  \)
  lies in \(L_{+,\R}\) for sufficiently small \(t\). Hence
  \(
    (A,\ldots,A)
    \in
    T_{\Delta(P)}L_{+,\R}.
  \)
  Since the mean-curvature vector is normal to \(L_{+,\R}\),
  \[
    0
    =
    \left\langle
      \mathbf H_{\Delta(P)},
      (A,\ldots,A)
    \right\rangle
    =
    N\Tr(B^TA)
  \]
  for every \(A\in\Sym_d\). Taking \(A=B\) gives \(B=0\).
  Therefore \(\mathbf H_{\Delta(P)}=0\), and \(L_{+,\R}\) is minimal.
\end{proof}

\begin{proof}[Proof of \Cref{cor:real-fibration}]
  In the coordinates of \eqref{eq:real-Psi}, we have
  \[
    (q_{\R}\circ\Psi_{\R})(P,V_1,\ldots,V_N)=V_N.
  \]
  Hence \(q_{\R}\) is a smooth globally trivial fiber bundle, with
  \[
    L_{Q,\R}
    =
    \Psi_{\R}\bigl(
      \Sym_d^+\times O_d^{N-1}\times\{Q\}
    \bigr)
    \cong
    \Sym_d^+\times O_d^{N-1}.
  \]

  For \(Q\in O_d\), define
  \[
    \tau_Q(W_1,\ldots,W_N)
    :=
    (W_1,\ldots,W_{N-1},QW_N).
  \]
  This is an orthogonal transformation of \(\Mat_d(\R)^N\). It
  preserves balancedness and satisfies
  \[
    (\tau_Q\mathbf W)_N\cdots(\tau_Q\mathbf W)_1
    =
    Q(W_N\cdots W_1),
  \]
  so \(\tau_Q(L_{+,\R})=L_{Q,\R}\). It follows from
  \Cref{thm:real-identity-fiber} that every connected component of
  \(L_{Q,\R}\) is minimal.

  Finally, \(\Sym_d^+\) is connected and \(O_d\) has two connected
  components. Thus \(L_{Q,\R}\) has exactly \(2^{N-1}\) connected
  components. Since \(q_{\R}\) is a submersion, the connected
  components of its fibers form a foliation of \(\mathcal M_{\R}\).
\end{proof}
\section{Closures and low-rank factorizations}
\label{sec:leaf-closures}

In the scalar case, adjoining the origin to \(L_+\) recovers the
Harvey--Lawson cone. This motivates the study of the closure of each fiber
\(L_Q\) in \(\Mat_d(\C)^N\), thereby allowing the common rank of the
matrices to drop. For \(Q\in\U_d\), set
\begin{equation}
  \mathcal C_Q
  :=
  \overline{L_Q},
  \label{eq:closure-definition}
\end{equation}
where the closure is taken in \(\Mat_d(\C)^N\).

For example, when \(N=2\), writing \(A=V_1P\) gives
\begin{equation}
  L_Q
  =
  \left\{
    (A,QA^*):A\in\GL_d(\C)
  \right\},
  \qquad
  \mathcal C_Q
  =
  \left\{
    (A,QA^*):A\in\Mat_d(\C)
  \right\}.
  \label{eq:depth-two-closure}
\end{equation}
In particular, \(\mathcal C_Q\) is a real linear subspace of
\(\Mat_d(\C)^2\).

Our treatment of the closures \(\mathcal C_Q\) is set-theoretic. We do
not address whether each \(L_Q\) extends across the low-rank locus as a
special Lagrangian integral current.

Write \(\Herm_d^{\geq0}\) for the cone of positive-semidefinite
Hermitian matrices.
The following proposition shows that taking the closure of \(L_Q\)
amounts precisely to allowing the common factor \(P\) in
\Cref{prop:balanced-coordinates} to become positive semidefinite.

\begin{proposition}
  \label[proposition]{prop:leaf-closures}
  Fix \(Q\in\U_d\). A point
  \(\mathbf W\in\Mat_d(\C)^N\) belongs to
  \(\mathcal C_Q\) if and only if there exist
  \(P\in\Herm_d^{\geq0}\) and unitary matrices
  \(V_0,\ldots,V_N\in\U_d\), with
  \(V_0=\Id_d\) and \(V_N=Q\), such that
  \begin{equation}
    W_k
    =
    V_kPV_{k-1}^*,
    \qquad
    1\leq k\leq N.
    \label{eq:semidefinite-common-factorization}
  \end{equation}
\end{proposition}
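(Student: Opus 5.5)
The plan is to prove the two inclusions separately. Let \(S_Q\) denote the set of tuples of the form \eqref{eq:semidefinite-common-factorization} with \(P\in\Herm_d^{\geq0}\), \(V_0=\Id_d\) and \(V_N=Q\). In this notation the claim is \(\mathcal C_Q=S_Q\).

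For the inclusion \(S_Q\subseteq\mathcal C_Q\), fix \(\mathbf W\in S_Q\), given by \(P\) and \(V_1,\ldots,V_{N-1}\). Set \(P_\varepsilon:=P+\varepsilon\Id_d\in\Herm_d^+\) for \(\varepsilon>0\). By \eqref{eq:general-leaf-coordinates}, the tuple \(\Psi(P_\varepsilon,V_1,\ldots,V_{N-1},Q)\) lies in \(L_Q\). Each of its components depends polynomially on \(\varepsilon\) and converges to \(W_k\) as \(\varepsilon\to0\). Hence \(\mathbf W\in\overline{L_Q}\).

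For the reverse inclusion \(\mathcal C_Q\subseteq S_Q\), I will use compactness. Take a sequence \(\mathbf W^{(m)}\in L_Q\) with \(\mathbf W^{(m)}\to\mathbf W\). By \Cref{prop:balanced-coordinates} and \eqref{eq:general-leaf-coordinates}, write \(W^{(m)}_k=V^{(m)}_kP^{(m)}V^{(m)*}_{k-1}\) with \(V_0^{(m)}=\Id_d\) and \(V_N^{(m)}=Q\). Since \(W_1^{(m)*}W_1^{(m)}=(P^{(m)})^2\), we get \(P^{(m)}=(W_1^{(m)*}W_1^{(m)})^{1/2}\). The square-root map is continuous on \(\Herm_d^{\geq0}\), so \(P^{(m)}\to P:=(W_1^*W_1)^{1/2}\in\Herm_d^{\geq0}\); this uses only that the square root extends continuously to semidefinite matrices. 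Because \(\U_d^{N-1}\) is compact, we can pass to a subsequence along which \(V_k^{(m)}\to V_k\in\U_d\) for \(1\leq k<N\). Taking limits in the factorization then gives \(W_k=V_kPV_{k-1}^*\), with \(V_0=\Id_d\) and \(V_N=Q\) fixed along the sequence. Thus \(\mathbf W\in S_Q\).

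I do not expect a serious obstacle. The one point to handle carefully is that uniqueness of the unitary factors fails once \(P\) is singular. The argument should therefore never try to recover \(V_k\) from \(\mathbf W\) through \eqref{eq:recover-unitary-factors}, since that formula involves \(P^{-1}\). Compactness of \(\U_d\) supplies limiting unitaries in place of uniqueness, and the statement only asks for existence. As a sanity check, for \(N=2\) the set \(S_Q\) is \(\{(V_1P,QPV_1^*)\}\), and every \(A\in\Mat_d(\C)\) has a polar decomposition \(A=V_1P\) with \(P\) semidefinite. This agrees with \eqref{eq:depth-two-closure}.
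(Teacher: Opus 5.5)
Your proposal is correct and follows the paper's proof essentially step for step. For one inclusion you both perturb $P$ to $P+\varepsilon\Id_d$ to land in $L_Q$; for the other, $P_j=\bigl((W_1^{(j)})^*W_1^{(j)}\bigr)^{1/2}$ converges by continuity of the square root, and compactness of the unitary group yields convergent $V_k^{(j)}$. Your use of $\U_d^{N-1}$ instead of $\U_d^N$, since $V_0$ and $V_N$ are fixed, is only a cosmetic refinement.
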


\begin{proof}
  Suppose first that \(\mathbf W\) admits a factorization
  \eqref{eq:semidefinite-common-factorization} with
  \(P\in\Herm_d^{\geq0}\), \(V_0=\Id_d\), and \(V_N=Q\).
  For \(\varepsilon>0\), set
  \[
    P_\varepsilon
    :=
    P+\varepsilon\Id_d
    \in\Herm_d^+
  \]
  and define
  \[
    W_k^{(\varepsilon)}
    :=
    V_kP_\varepsilon V_{k-1}^*,
    \qquad
    1\leq k\leq N.
  \]
  Then
  \(
    \mathbf W^{(\varepsilon)}
    :=
    (W_1^{(\varepsilon)},\ldots,W_N^{(\varepsilon)})
  \)
  is balanced and has invertible components. Moreover, its product is
  \[
    W_N^{(\varepsilon)}
    \cdots
    W_1^{(\varepsilon)}
    =
    QP_\varepsilon^N
    \in
    Q\Herm_d^+.
  \]
  Hence
  \(
    \mathbf W^{(\varepsilon)}\in L_Q.
  \)
  Since
  \(
    \mathbf W^{(\varepsilon)}\to\mathbf W
  \)
  as \(\varepsilon\to0\), it follows that
  \(
    \mathbf W\in\mathcal C_Q.
  \)

  Conversely, let \(\mathbf W\in\mathcal C_Q\). Choose a sequence
  \(
    \mathbf W^{(j)}\in L_Q
  \)
  converging to \(\mathbf W\). By
  \Cref{prop:balanced-coordinates}, for every \(j\) there exist
  \[
    P_j\in\Herm_d^+,
    \qquad
    V_0^{(j)},\ldots,V_N^{(j)}\in\U_d,
  \]
  with
  \(
    V_0^{(j)}=\Id_d
  \)
  and
  \(
    V_N^{(j)}=Q,
  \)
  such that
  \begin{equation}
    W_k^{(j)}
    =
    V_k^{(j)}P_j
    \bigl(V_{k-1}^{(j)}\bigr)^*,
    \qquad
    1\leq k\leq N.
    \label{eq:approximating-positive-factorizations}
  \end{equation}
  Since \(V_0^{(j)}=\Id_d\), uniqueness of the factorization gives
  \[
    P_j
    =
    \left(
      \bigl(W_1^{(j)}\bigr)^*
      W_1^{(j)}
    \right)^{1/2}.
  \]
  Therefore
  \[
    P_j
    \longrightarrow
    P
    :=
    (W_1^*W_1)^{1/2}
    \in\Herm_d^{\geq0}.
  \]

  The group \(\U_d^N\) is compact. After passing to a subsequence, we
  may therefore assume that
  \[
    V_k^{(j)}
    \longrightarrow
    V_k\in\U_d,
    \qquad
    1\leq k\leq N.
  \]
  We have \(V_0=\Id_d\) and \(V_N=Q\). Passing to the limit in
  \eqref{eq:approximating-positive-factorizations} gives
  \[
    W_k
    =
    V_kPV_{k-1}^*,
    \qquad
    1\leq k\leq N.
  \]
  Thus \(\mathbf W\) has a semidefinite common factorization with
  terminal unitary factor \(Q\).
\end{proof}

We next extend the existence part of
\Cref{prop:balanced-coordinates} to arbitrary points of the balanced
variety.

\begin{lemma}
  \label[lemma]{lem:semidefinite-common-factorization}
  For every
  \(
    \mathbf W=(W_1,\ldots,W_N)\in\mathcal M_{\mathbf0},
  \)
  there exist \(P\in\Herm_d^{\geq0}\) and unitary matrices
  \(V_0,\ldots,V_N\in\U_d\), with \(V_0=\Id_d\), such that
  \[
    W_k
    =
    V_kPV_{k-1}^*,
    \qquad
    1\leq k\leq N.
  \]
\end{lemma}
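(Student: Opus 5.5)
The plan is to rerun the existence half of the proof of \Cref{prop:balanced-coordinates} almost verbatim. Every step used there survives when the \(W_k\) are allowed to be singular. Only the uniqueness of the unitary factors is lost, and the lemma does not claim uniqueness.

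\emph{Step 1: layerwise polar decompositions.} For each \(1\leq k\leq N\), set \(H_k:=(W_k^*W_k)^{1/2}\in\Herm_d^{\geq0}\). Choose a unitary \(A_k\in\U_d\) with \(W_k=A_kH_k\). Such an \(A_k\) always exists, even when \(W_k\) is singular. One way to see this is through a singular value decomposition \(W_k=X_k\Sigma_kY_k^*\): take \(A_k:=X_kY_k^*\), so that \(H_k=Y_k\Sigma_kY_k^*\). The factor \(A_k\) is not unique when \(W_k\) is singular, but any choice will do. This is the only point where singular matrices call for comment. I expect it to be the main (mild) obstacle, because the earlier proof implicitly used \(A_k=W_kH_k^{-1}\), and that formula is no longer available.

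\emph{Step 2: propagate the positive factor.} From \(W_k=A_kH_k\) we get \(W_kW_k^*=A_kH_k^2A_k^*\) for whatever unitary \(A_k\) was chosen. Balancedness then gives
\[
H_{k+1}^2=W_{k+1}^*W_{k+1}=W_kW_k^*=A_kH_k^2A_k^*=(A_kH_kA_k^*)^2 .
\]
Both \(H_{k+1}\) and \(A_kH_kA_k^*\) are positive semidefinite, and the positive-semidefinite square root of a positive-semidefinite matrix is unique. Hence \(H_{k+1}=A_kH_kA_k^*\) for \(1\leq k<N\).

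\emph{Step 3: induction.} Set \(V_0:=\Id_d\), \(P:=H_1\in\Herm_d^{\geq0}\), and \(V_k:=A_kV_{k-1}\) for \(1\leq k\leq N\). The same induction as in \Cref{prop:balanced-coordinates} gives \(H_k=V_{k-1}PV_{k-1}^*\) for all \(k\). Therefore
\[
W_k=A_kH_k=A_kV_{k-1}PV_{k-1}^*=V_kPV_{k-1}^*,
\]
which is the required factorization with \(V_0=\Id_d\).

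A limiting argument in the style of \Cref{prop:leaf-closures} is unnecessary here. It would also be awkward, since it is not a priori clear that every point of \(\mathcal M_{\mathbf0}\) is a limit of points of \(\mathcal M\). The direct algebraic argument avoids that issue entirely.
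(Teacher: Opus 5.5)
Your proposal is correct and follows the paper's proof essentially step by step: you choose a unitary factor \(A_k\) with \(W_k=A_kH_k\), derive \(H_{k+1}=A_kH_kA_k^*\) from uniqueness of positive-semidefinite square roots, and run the same induction with \(P=H_1\) and \(V_k=A_kV_{k-1}\). The only cosmetic difference is that you obtain \(A_k\) from a singular value decomposition, whereas the paper extends the partial isometry \(U_k|_{(\ker W_k)^\perp}\) to a unitary matrix; either choice works.
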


\begin{proof}
  For each \(1\leq k\leq N\), write the polar decomposition
  \[
    W_k
    =
    U_kH_k,
    \qquad
    H_k
    :=
    (W_k^*W_k)^{1/2}
    \in\Herm_d^{\geq0},
  \]
  where \(U_k\) is the partial isometry with initial space
  \((\ker W_k)^\perp\) and final space \(\operatorname{im}W_k\).
  Extend
  \(
    U_k|_{(\ker W_k)^\perp}
  \)
  to a unitary matrix \(A_k\in\U_d\). Since
  \[
    \operatorname{im}H_k
    =
    (\ker W_k)^\perp,
  \]
  we still have
  \(
    W_k=A_kH_k.
  \)

  The equations defining \(\mathcal M_{\mathbf0}\) give
  \[
    H_{k+1}^2
    =
    W_{k+1}^*W_{k+1}
    =
    W_kW_k^*
    =
    A_kH_k^2A_k^*,
    \qquad
    1\leq k<N.
  \]
  Both \(H_{k+1}\) and \(A_kH_kA_k^*\) are positive semidefinite.
  Uniqueness of the positive-semidefinite square root therefore gives
  \[
    H_{k+1}
    =
    A_kH_kA_k^*,
    \qquad
    1\leq k<N.
  \]

  Set
  \[
    V_0
    :=
    \Id_d,
    \qquad
    P
    :=
    H_1,
    \qquad
    V_k
    :=
    A_kV_{k-1},
    \quad
    1\leq k\leq N.
  \]
  The same induction as in the proof of
  \Cref{prop:balanced-coordinates} gives
  \[
    H_k
    =
    V_{k-1}PV_{k-1}^*,
    \qquad
    1\leq k\leq N.
  \]
  Consequently,
  \[
    W_k
    =
    A_kH_k
    =
    V_kPV_{k-1}^*,
    \qquad
    1\leq k\leq N.
  \]
\end{proof}

Thus for every \(Q\in\U_d\),
  \begin{equation}
    \mathcal C_Q
    =
    \left\{
      \mathbf W\in\mathcal M_{\mathbf0}
      \ \middle|\
      W_N\cdots W_1\in Q\Herm_d^{\geq0}
    \right\}
    =
    \mathcal M_{\mathbf0}
    \cap
    \phi^{-1}\bigl(Q\Herm_d^{\geq0}\bigr).
  \end{equation}

Since \(\mathcal M_{\mathbf0}\) is closed and contains \(L_Q\),
each closure \(\mathcal C_Q\) is contained in
\(\mathcal M_{\mathbf0}\). Conversely, every
point \(\mathbf W\in\mathcal M_{\mathbf 0}\) belongs to
\(\mathcal C_Q\) for some
\(Q\in\U_d\). Hence
\begin{equation}
  \mathcal M_{\mathbf0}
  =
  \bigcup_{Q\in\U_d}\mathcal C_Q.
  \label{eq:balanced-variety-as-union-of-closures}
\end{equation}
In particular, the family \(\{\mathcal C_Q\}_{Q\in\U_d}\) extends the foliation
of \(\mathcal M\) by the leaves \(L_Q\) to a cover of the whole
balanced variety.

The structure of this cover is governed by the rank of the common
factor \(P\). Since multiplication by unitary matrices preserves rank,
every \(\mathbf W\in\mathcal C_Q\) satisfies
\[
  \operatorname{rank}W_1
  =\cdots
  =\operatorname{rank}W_N
  =\operatorname{rank}P.
\]
Consequently,
\(
  \mathcal C_Q\cap\GL_d(\C)^N=L_Q,
\)
and \(\mathcal C_Q\setminus L_Q\) consists exactly of the tuples whose
components are rank deficient.

When \(d=1\), the only possible rank drop forces \(P=0\). Therefore
\(
  \mathcal C_Q=L_Q\cup\{\mathbf 0\}.
\)
In particular,
\(
  \mathcal C_{\Id_1}=C_{\mathrm{HL}}.
\)

When \(d>1\), a rank-deficient common factor \(P\) need not be zero.
Indeed, fix \(1\leq r<d\) and set
\[
  P_r
  :=
  \operatorname{diag}
  \bigl(
    \underbrace{1,\ldots,1}_{r},
    0,\ldots,0
  \bigr).
\]
Taking
\[
  V_0=\cdots=V_{N-1}=\Id_d,
  \qquad
  V_N=Q,
\]
in \Cref{prop:leaf-closures} gives
\[
  \left(
    \underbrace{P_r,\ldots,P_r}_{N-1},
    QP_r
  \right)
  \in
  \mathcal C_Q\setminus L_Q.
\]
Every component of this tuple has rank \(r\).

On \(\mathcal M\), the product is invertible, so its unitary
polar factor is unique and the fibers \(L_Q\) are pairwise disjoint.
At rank-deficient points of
\(\mathcal M_{\mathbf0}\), the unitary polar factor of
the product need not be unique. Consequently, the closures
\(\mathcal C_Q\) may overlap. The
following proposition shows that every point of
\(\mathcal C_Q\setminus L_Q\) belongs to another closure.

\begin{proposition}
  \label[proposition]{prop:overlapping-closures}
  Fix \(Q\in\U_d\). Every point of
  \(\mathcal C_Q\setminus L_Q\) belongs to
  \(\mathcal C_{Q'}\) for some \(Q'\in\U_d\) with \(Q'\neq Q\).
\end{proposition}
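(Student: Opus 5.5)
Let \(\mathbf W\in\mathcal C_Q\setminus L_Q\). By \Cref{prop:leaf-closures} we write
\(
  W_k=V_kPV_{k-1}^*
\)
with \(P\in\Herm_d^{\geq0}\), \(V_0=\Id_d\) and \(V_N=Q\). Since \(\mathbf W\notin L_Q\), and \(\mathcal C_Q\cap\GL_d(\C)^N=L_Q\), the common factor \(P\) is singular, so \(\ker P\neq\{0\}\). The idea is to change the last unitary factor \(V_N\) only on the kernel of \(P\). This does not alter \(W_N=V_NPV_{N-1}^*\), but it changes the terminal factor to some \(Q'\neq Q\). We then invoke \Cref{prop:leaf-closures} in the reverse direction to conclude that \(\mathbf W\in\mathcal C_{Q'}\).

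Let \(\Pi\) be the orthogonal projection onto \(\ker P\), which is nonzero. Choose a unitary \(S\in\U_d\) with \(S\neq\Id_d\) that acts as the identity on \((\ker P)^\perp=\operatorname{im}P\) and preserves \(\ker P\). A concrete choice is
\[
  S:=\Id_d-2\Pi,
\]
which equals \(-1\) on \(\ker P\) and \(1\) on its complement. This \(S\) is Hermitian and unitary, and \(S\neq\Id_d\) because \(\Pi\neq0\). Since \(P\) is Hermitian, \(\Pi P=P\Pi=0\), and therefore \(SP=P\). Now set \(V_k':=V_k\) for \(0\le k\le N-1\) and \(V_N':=QS\). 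Then
\[
  V_N'PV_{N-1}^*=QSPV_{N-1}^*=QPV_{N-1}^*=W_N,
\]
and the other components are unchanged. So \(\mathbf W\) has a semidefinite common factorization with \(V_0'=\Id_d\) and terminal factor \(Q':=QS\). By \Cref{prop:leaf-closures}, \(\mathbf W\in\mathcal C_{Q'}\), and \(Q'\neq Q\) because \(S\neq\Id_d\).

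The argument has no real obstacle. The only point needing care is making sure the modification leaves \(W_N\) unchanged, which comes down to placing \(S\) between \(V_N\) and \(P\) (as \(V_NS\), since \(V_N=Q\)) rather than on the left of \(V_N\). One should also confirm that \(\ker P\) is really nontrivial. This is exactly the observation that rank deficiency of any component is equivalent to singularity of \(P\), because the components are unitary conjugates of \(P\). In fact the construction yields a whole family of such \(Q'\), namely \(Q(\Id_d-\Pi+U\Pi)\) for any unitary \(U\) on \(\ker P\). This is the non-uniqueness of the polar factor at rank-deficient points.
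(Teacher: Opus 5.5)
Your proposal is correct and follows the paper's proof essentially verbatim. The common steps are: \(P\) is singular, a unitary \(S\neq\Id_d\) fixing \(\operatorname{im}P\) gives \(SP=P\), and \(Q'=QS\) together with \Cref{prop:leaf-closures}, used in both directions, finishes the argument. Your reflection \(S=\Id_d-2\Pi\) is the special case \(S_0=-\Id_{\ker P}\) of the paper's \(S=\Id_{\operatorname{im}P}\oplus S_0\), and deducing singularity from \(\mathcal C_Q\cap\GL_d(\C)^N=L_Q\) is equivalent to the paper's direct check that \(P\in\Herm_d^+\) would force \(\mathbf W\in L_Q\).
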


\begin{proof}
  Let
  \(
    \mathbf W\in\mathcal C_Q\setminus L_Q.
  \)
  By \Cref{prop:leaf-closures}, there is a factorization
  \[
    W_k
    =
    V_kPV_{k-1}^*,
    \qquad
    V_0=\Id_d,
    \qquad
    V_N=Q,
  \]
  with \(P\in\Herm_d^{\geq0}\). The matrix \(P\) is singular. Indeed, if \(P\in\Herm_d^+\), then all
the matrices \(W_k\) would be invertible and
\[
  W_N\cdots W_1
  =
  QP^N
  \in
  Q\Herm_d^+,
\]
which would imply
\(
  \mathbf W\in L_Q.
\)

  Since \(P\) is Hermitian,
  \[
    \C^d
    =
    \operatorname{im}P
    \oplus
    \ker P.
  \]
  Choose a nonidentity unitary map \(S_0\) on \(\ker P\), and define
  \[
    S
    :=
    \Id_{\operatorname{im}P}
    \oplus
    S_0
    \in\U_d.
  \]
  Then \(S\neq\Id_d\) and
  \(
    SP=P.
  \)
  Set
  \(
    Q':=QS.
  \)
  Since \(S\neq\Id_d\), we have \(Q'\neq Q\), while
  \[
    Q'PV_{N-1}^*
    =
    QSPV_{N-1}^*
    =
    QPV_{N-1}^*
    =
    W_N.
  \]
  Thus replacing the terminal factor \(V_N=Q\) by \(V_N=Q'\) does
  not change the tuple \(\mathbf W\). By
  \Cref{prop:leaf-closures},
  \(
    \mathbf W\in\mathcal C_{Q'}.
  \)
\end{proof}

\begin{remark}
  Since \(L_Q\) is invariant under positive scaling, \(\mathcal C_Q\) is
invariant under nonnegative scaling. The behavior of \(\mathcal C_Q\) under negative scaling depends on
  the parity of \(N\). For \(t<0\),
  \[
    t\mathcal C_Q
    =
    \begin{cases}
      \mathcal C_Q, & N \text{ even},\\
      \mathcal C_{-Q}, & N \text{ odd}.
    \end{cases}
  \]
  Hence \(\mathcal C_Q\) is ruled by real lines through the origin when
  \(N\) is even, while
  \(\mathcal C_Q\cup\mathcal C_{-Q}\) is ruled by such lines when
  \(N\) is odd.
\end{remark}

\section*{Acknowledgements}
TK is grateful to Gopala Krishna Srinivasan for enlightening discussions on Lie theory and symplectic geometry.

\section*{Use of AI}
The authors used ChatGPT and Gemini as interactive tools for mathematical
discussion and editorial assistance during the preparation of this manuscript.
Any suggestions provided by these tools were checked by the authors, who take
full responsibility for the manuscript.

\printbibliography

@InProceedings{menon_2025,
author="Menon, Govind",
title="The Geometry of the Deep Linear Network",
booktitle="XIV Symposium on Probability and Stochastic Processes",
year="2025",
pages="1--47"
}

@article{urbano_castro_2004,
    author = {Castro, Ildefonso and Urbano, Francisco},
    title = {On a new construction of special Lagrangian immersions in complex Euclidean space},
    journal = {The Quarterly Journal of Mathematics},
    volume = {55},
    number = {3},
    pages = {253-265},
    year = {2004}
}

@InProceedings{arora_cohen_hazan_2018,
  title = 	 {On the Optimization of Deep Networks: Implicit Acceleration by Overparameterization},
  author =       {Arora, Sanjeev and Cohen, Nadav and Hazan, Elad},
  booktitle = 	 {Proceedings of the 35th International Conference on Machine Learning},
  pages = 	 {244--253},
  year = 	 {2018},
  volume = 	 {80}
}

@article{MenonYu2025,
  title   = {An entropy formula for the {Deep Linear Network}},
  author  = {Menon, Govind and Yu, Tianmin},
  journal = {SIAM Journal on Mathematical Analysis},
  year    = {2026},
  note    = {To appear},
  eprint  = {2509.09088},
  archivePrefix = {arXiv},
  primaryClass  = {cs.LG}
}

@misc{lindsey_menon_2026,
      title={Regularization implies balancedness in the deep linear network}, 
      author={Kathryn Lindsey and Govind Menon},
      year={2026},
      eprint={2511.01137},
      archivePrefix={arXiv},
      primaryClass={cs.LG}
}

@phdthesis{Kotwal2026,
      title={Symmetries and Gradient Flows in the {Deep Linear Network}},
      author={Tejas Kotwal},
      year={2026},
      school={Brown University},
      type={Ph.D. dissertation},
      doi={10.26300/bd6x-0503}
}

@article{HarveyLawson1982,
  author  = {Harvey, Reese and Lawson, Jr., H. Blaine},
  title   = {Calibrated geometries},
  journal = {Acta Mathematica},
  volume  = {148},
  number  = {1},
  pages   = {47--157},
  year    = {1982}
}

@article{Joyce_2002,
author = {Dominic Joyce},
title = {{Special Lagrangian $m$-folds in $\mathbb{C}^m$ with symmetries}},
volume = {115},
journal = {Duke Mathematical Journal},
number = {1},
pages = {1 -- 51},
year = {2002}
}

@article{haskins_2004,
  author  = {Haskins, Mark},
  title   = {Special Lagrangian Cones},
  journal = {American Journal of Mathematics},
  year    = {2004},
  volume  = {126},
  number  = {4},
  pages   = {845--871}
}

@article{atiyah_hitchin_drinfeld_manin,
title = {Construction of instantons},
journal = {Physics Letters A},
volume = {65},
number = {3},
pages = {185-187},
year = {1978},
author = {M.F. Atiyah and N.J. Hitchin and V.G. Drinfeld and Yu.I. Manin}
}

@article{KronheimerNakajima1990,
  author  = {Kronheimer, Peter B. and Nakajima, Hiraku},
  title   = {{Yang--Mills} instantons on {ALE} gravitational instantons},
  journal = {Mathematische Annalen},
  year    = {1990},
  volume  = {288},
  pages   = {263--307}
}

@online{Uhlenbeck26,
  author       = {Uhlenbeck, Karen},
  title        = {A Sampling of Minimization Problems},
  year         = {2026},
  date         = {2026-01-28},
  organization = {Institute for Advanced Study},
  url          = {https://www.youtube.com/watch?v=rLCwqR6NWmM},
  note         = {Abel at IAS lecture}
}

@inproceedings{cheng1982real,
  title={The real Monge--Amp{\`e}re equation and affine flat structures},
  author={Cheng, Shiu Yuen and Yau, Shing-Tung},
  booktitle={Proceedings of the 1980 Beijing Symposium on Differential Geometry and Differential Equations},
  volume={1},
  pages = {339--370},
  year={1982}
}

@article{hildebrand,
author = {Hildebrand, Roland},
title = {Canonical Barriers on Convex Cones},
journal = {Mathematics of Operations Research},
volume = {39},
number = {3},
pages = {841-850},
year = {2014}
}

@article{fox2015schwarz,
  title={A Schwarz lemma for K{\"a}hler affine metrics and the canonical potential of a proper convex cone},
  author={Fox, Daniel J.F.},
  journal={Annali di Matematica Pura ed Applicata},
  volume={194},
  number={1},
  pages={1--42},
  year={2015}
}

@article{karmarkar_1990,
author="Karmarkar, Narendra",
title="Riemannian geometry underlying interior-point methods for linear programming",
journal="Contemp. Math.",
publisher="Amer. Math. Soc.",
year="1990",
volume="114",
pages="51-75"
}

\end{document}